\newtheorem{thm}{Theorem}[section]
\newtheorem{cor}[thm]{Corollary}
\newtheorem{lem}[thm]{Lemma}
\newtheorem{pro}[thm]{Proposition}
\theoremstyle{definition}
\newtheorem{defi}[thm]{Definition}
\newtheorem{exe}[thm]{Example}
\newtheoremstyle{remarque}{}{}{}{}{\it}{.}{\newline}{}
\theoremstyle{remarque}
\newtheorem*{rem}{Remark}
\newcommand{\asd}[5]{%
\setbox1=\hbox{\ensuremath{^{#1}}}%
\setbox2=\hbox{\ensuremath{_{#2}}}%
\setbox5=\hbox{\ensuremath{#5}}%
\hspace{\ifnum\wd1>\wd2\wd1\else\wd2\fi}%
\ensuremath{\copy5^{\hspace{-\wd1}\hspace{-\wd5}#1\hspace{\wd5}#3}%
_{\hspace{-\wd2}\hspace{-\wd5}#2\hspace{\wd5}#4}%
}}
\DeclareMathOperator*{\rprod}{\mathrlap{\prod}{\coprod}}
\DeclareSymbolFont{cyrletters}{OT2}{wncyr}{m}{n}
\DeclareMathSymbol{\Sha}{\mathalpha}{cyrletters}{"58}
\DeclareMathSymbol{\Brusse}{\mathalpha}{cyrletters}{"42}
\newcommand{\z}{\mathbb{Z}}
\newcommand{\q}{\mathbb{Q}}
\newcommand{\ra}{\rightarrow}
\newcommand{\coker}{\mathrm{Coker}}
\renewcommand{\hom}{\mathrm{Hom}}
\renewcommand{\H}{\mathrm{H}}
\newcommand{\aut}{\mathrm{Aut}}
\renewcommand{\inf}{\mathrm{Inf}}
\newcommand{\res}{\mathrm{Res}}
\newcommand{\bic}{\mathrm{bic}}
\newcommand{\gm}{\mathbb{G}_{\mathrm{m}}}
\newcommand{\br}{\mathrm{Br}\,}
\newcommand{\un}{\mathrm{un}}
\newcommand{\tr}{\mathrm{tr}}
\newcommand{\brun}{{\mathrm{Br}_{\un}}}
\newcommand{\al}{\mathrm{al}}
\newcommand{\brunal}{\mathrm{Br}_{\un,\al}}
\newcommand{\sln}{\mathrm{SL}_n}
\newcommand{\inv}{\mathrm{inv}}
\newcommand{\bb}[1]{\mathbb{#1}}
\newcommand{\gal}{\mathrm{Gal}}
\newcommand{\Gal}{\mathrm{Gal}}
\newcommand{\bad}{\mathrm{Bad}}
\title[The Grunwald problem and approximation]{The Grunwald problem and approximation properties for homogeneous spaces}
\author{Cyril Demarche, Giancarlo Lucchini Arteche, Danny Neftin}
\date{}
\begin{document}

\begin{abstract}
Given a group $G$ and a number field $K$, the Grunwald problem asks whether given field extensions of completions of $K$ at finitely many places can be approximated by a single field extension of $K$ with Galois group $G$. This can be viewed as the case of constant groups $G$ in the  more general problem of determining  for which $K$-groups $G$ the variety $\sln/G$ has weak approximation. We show that away from an explicit set of bad places both problems have an affirmative answer for iterated semidirect products with abelian kernel. Furthermore, we give counterexamples to both assertions at bad places. These turn out to be the first examples of transcendental Brauer-Manin obstructions to weak approximation for homogeneous spaces. \\

\noindent{\bf Keywords:} Grunwald-Wang problem, Galois cohomology, homogeneous spaces, weak approximation, Brauer-Manin obstruction.\\
{\bf MSC classes (2010):} 11R34, 14M17, 14G05, 11E72.
\end{abstract}

\maketitle

\section{Introduction}\label{sec:intro}
The Grunwald-Wang theorem has fundamental applications to the structure theory of finite dimensional semisimple algebras, cf.~\cite[Ch. 18]{Pie}, and provides an answer for abelian groups $G$ to the more general Grunwald problem. The latter is an inverse Galois problem of increasing interest due to its recently studied connections with the regular inverse Galois problem and with weak approximation, cf.~\cite{DG,HarariBulletinSMF}.

Fix a number field $K$,  let $K_v$ denote the completion of $K$ at a place $v$, and $\gal(K)$ denote its absolute Galois group. Let $G$ be a finite group, and $S$ a finite set of places of $K$. The Grunwald problem then asks whether every prescribed local Galois extensions $L^{(v)}/K_v$, $v\in S$, with embeddings $\gal(L^{(v)}/K_v)\hookrightarrow G$ can be approximated by a global extension $L/K$ with Galois group $G$. More precisely:

\bigskip

\noindent {\bf Grunwald Problem.} Is the restriction map
$$\hom(\gal(K),G)\ra \prod_{v\in S}\hom(\gal(K_v),G)/\sim $$
surjective?

Here $\phi_1\sim \phi_2$ if $\phi_1=g\phi g^{-1}$ for some $g\in G$. Note that the quotient by $\sim$ is necessary since the decomposition group of $L/K$ is defined up to conjugation.

\bigskip

Families of groups $G$ and number fields $K$ for which $(G,K,S)$ has an affirmative answer to the Grunwald problem for every $S$ include:
(1) abelian groups of odd order over every number field, by the Grunwald--Wang theorem \cite{Gru, Wan};
(2)~solvable groups of order prime to the number of roots of unity in $K$, by Neukirch's theorem~\cite{NeukirchSolvable};
and (3) groups with a generic extension over $K$, by Saltman's theorem \cite{Sal}.

Recent results by D\`ebes-Ghazi on the inverse Galois problem \cite{DG} and by Harari on weak approximation for homogeneous spaces \cite{HarariBulletinSMF}, suggest that for every finite group $G$ there exists a finite set $T:=T(G, K)$ of ``bad places'' of $K$ such that the Grunwald problem has an affirmative answer for $(G,K,S)$ for every set $S$ that is disjoint from $T$. In fact, the existence of such a set $T$ is implied by a conjecture of Colliot-Th\'el\`ene on the Brauer-Manin obstruction for rationally connected varieties. The connection is done by considering the following more general version of the Grunwald problem for finite $K$-groups $G$.

\bigskip
\noindent {\bf Approximation property.} A $K$-group  $G$ has \emph{(weak) approximation} in a set $S$ of places of $K$ if the natural restriction map
\[\H^1(K,G)\to\prod_{v\in S}\H^1(K_v,G),\]
is surjective. We shall say $G$ has {\it approximation away from $T$}, if $G$ has approximation in $S$ for every finite set $S$ of places of $K$ that is disjoint from $T$.
\bigskip

The approximation property for every finite $S\subset\Omega_K$ is equivalent to weak approximation for certain homogeneous spaces (hence its name), see Section \ref{sec:hom spaces}. Moreover, for constant groups $G$, it is equivalent to a positive answer to the Grunwald problem for $(G,K,S)$, see Section \ref{sec:cohomology}.

The existence of a finite set $T$ of ``bad places'' away from which the approximation property  holds is expected to hold for arbitrary finite $K$-groups, and is strongly related with arithmetic properties of homogeneous spaces.
Moreover, the existence of such a set $T$ for $K$-groups has been proved over arbitrary number fields $K$ for: (1)~abelian $K$-groups $G$ by Wang, cf.~\cite{Wan}; (2) iterated semidirect products $G= A_1\rtimes (A_2\rtimes \cdots \rtimes A_r)$ of abelian $K$-groups by Harari, cf.~\cite{HarariBulletinSMF};
and (3)~solvable $K$-groups $G$ of order prime to the number of roots of unity in an extension of $K$ splitting $G$ by the second author (here $T=\emptyset$, cf.~\cite{GLA-AFPHEH}).

Ever since Wang's work on the subject, triples $(G, K, S)$ having a negative answer to the Grunwald problem (and thus not having the approximation property) are known to exist. However, it is unclear what the set $T=T(G,K)$ should be. In fact, there is no explicit description of a set $T$  even for basic groups such as semidirect products of two abelian $p$-groups.

This paper gives both affirmative and negative answers to the Grunwald problem and the approximation property, suggesting that the set $T=T(G,K)$ can always be taken to be the union of the places of $K$ which divide the order of $G$ and those which  ramify in the minimal extension splitting $G$.

\subsection{Main results}
We give a precise description of the set $T$ of ``bad places'' for iterated semidirect products of abelian groups, complementing Harari's result \cite[Thm. 1]{HarariBulletinSMF}.

\begin{thm}\label{Main Thm Intro}
Let $K$ be a number field and $G$ be a finite $K$-group which is an iterated semidirect product $G= A_1\rtimes (A_2\rtimes \cdots \rtimes A_r)$ of abelian $K$-groups. Let $L/K$ be an extension splitting $G$, and $T$ the set of places that either divide the order of~$G$, or ramify in $L$. Then $G$ has approximation away from $T$.
\end{thm}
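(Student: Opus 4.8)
The plan is to induct on the number $r$ of semidirect factors, the engine being the twisting (fibration) method in non-abelian Galois cohomology applied to the split extension cut out by the first factor $A_1$.

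For the base case $r=1$ the group $G$ is abelian, and the assertion is the approximation form of Wang's theorem \cite{Wan}: since $S$ is disjoint from $T$ it contains no place dividing $|G|$, in particular no place above $2$ when $2\mid|G|$, so the Grunwald--Wang special case cannot occur. What I will actually reuse in the inductive step is the following sharp form: if $A$ is a finite abelian $K$-group and $S$ is a finite set of places containing no place dividing $|A|$, then $\H^1(K,A)\to\prod_{v\in S}\H^1(K_v,A)$ is surjective; no hypothesis on the ramification of a splitting field of $A$ is needed here.

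For the inductive step I write $G=A_1\rtimes G'$ with $G'=A_2\rtimes(\cdots\rtimes A_r)$ and fix the $K$-group section $s\colon G'\to G$, giving a split exact sequence $1\to A_1\to G\xrightarrow{\,p\,}G'\to1$ of finite $K$-groups. Since $L$ also splits $G'$ and $|G'|$ divides $|G|$, the bad set attached to $G'$ is contained in $T$, so the inductive hypothesis yields approximation for $G'$ in every finite set of places disjoint from $T$. Fix such an $S$ and a family $(g_v)_{v\in S}$ with $g_v\in\H^1(K_v,G)$; pushing forward along $p$ and applying the inductive hypothesis, I obtain $c'\in\H^1(K,G')$ with $c'|_v=p_*(g_v)$ for all $v\in S$. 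I then twist by the $G$-valued cocycle $c:=s_*(c')$, obtaining a split exact sequence $1\to{}_{c'}A_1\to{}_cG\to{}_{c'}G'\to1$ together with twisting bijections $\tau$ of pointed sets between $\H^1(-,G)$ and $\H^1(-,{}_cG)$ (and between $\H^1(-,G')$ and $\H^1(-,{}_{c'}G')$) sending the classes of $c$, resp.\ $c'$, to the neutral element and commuting with restriction to completions. For $v\in S$ one has $p_*(g_v)=c'|_v$, so $\tau(g_v)\in\H^1(K_v,{}_cG)$ dies in $\H^1(K_v,{}_{c'}G')$; by exactness at $\H^1(K_v,{}_cG)$ it equals $\iota_*(a_v)$ for some $a_v\in\H^1(K_v,{}_{c'}A_1)$, where $\iota\colon{}_{c'}A_1\hookrightarrow{}_cG$. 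Now ${}_{c'}A_1$ is a finite abelian $K$-group with $|{}_{c'}A_1|=|A_1|$ dividing $|G|$, so $S$ contains no place dividing its order, and the sharp abelian statement produces $a\in\H^1(K,{}_{c'}A_1)$ with $a|_v=a_v$ for all $v\in S$. Setting $h:=\tau^{-1}(\iota_*(a))\in\H^1(K,G)$, I get $h|_v=\tau^{-1}(\iota_*(a_v))=\tau^{-1}(\tau(g_v))=g_v$ for every $v\in S$, which is the desired approximation.

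The main obstacle is the bookkeeping in this step. Because $A_1$ is normal but not central in $G$, I must use the splitting twice: to twist $G$ itself (and not merely the kernel and the quotient) by lifting $c'$ along $s$, and to identify the neutral fibre of $\H^1(-,{}_cG)\to\H^1(-,{}_{c'}G')$ with the image of $\H^1(-,{}_{c'}A_1)$; I also need every twisting bijection to be compatible with localization so that the local constructions glue. The second, more conceptual point is that a single, $r$-independent set $T$ can propagate through the induction only because the abelian case is available in the sharp form above — the sole constraint on $S$ being divisibility of $|A|$, with no condition on the ramification of the splitting field — since the twisted kernel ${}_{c'}A_1$ can acquire a badly ramified splitting field while its order still divides $|G|$.
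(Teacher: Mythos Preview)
Your overall architecture---induct on $r$, lift along the section to a global cocycle, twist by it, and reduce to approximation for the twisted abelian kernel---is exactly the skeleton of the paper's proof. The gap is in the last step. You assert as a ``sharp form of Wang's theorem'' that every finite abelian $K$-group $A$ has approximation in any finite set $S$ of places not dividing $|A|$, with no hypothesis on the ramification of a splitting field. This is not Wang's theorem: Wang's result and the classical Grunwald--Wang theorem concern \emph{constant} abelian groups, where the only obstruction is the special case at primes above~$2$. For a genuinely twisted module such as ${}_{c'}A_1$, whose splitting field depends on the global class $c'$ and may be ramified at the places of $S$, no such off-the-shelf statement is available. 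Note that even the $r=1$ case of the theorem you are proving carries the ramification hypothesis, so you cannot invoke it for ${}_{c'}A_1$; and if your sharp statement were known, the explicit bad set for a semidirect product of two abelian $p$-groups would have followed immediately from the standard twisting argument, contrary to what the paper's introduction records.

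This is precisely the difficulty the paper's proof is designed to overcome, and it does so not by proving your sharp abelian statement but by \emph{choosing the lift $c'$ with extra care}. Before invoking the inductive hypothesis for $G'$, the paper adjoins to $S$ an auxiliary set $S'$ of ``mirror'' places $v'$ produced by the reocurrence machinery of Section~\ref{section reocurrence} (Proposition~\ref{prop reocurrence}), imposes at each $v'$ the local condition $\phi_v^*(\gamma_v)$ duplicating the one at $v$, and further forces the restriction of the chosen cocycle to $\gal(L(\mu_n))$ to be surjective. Lemma~\ref{reocurrence lemma} then guarantees that the decomposition groups of $v$ and $v'$ are conjugate in the splitting field $L'$ of the twisted dual ${}_c\hat A$, so any class $\alpha\in\Sha^1_S(K,{}_c\hat A)$---which by Lemma~\ref{Lemma on Sha} comes from $\H^1(L'/K,{}_c\hat A)$---satisfies $\alpha_v=0$ because $\alpha_{v'}=0$. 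In short, the paper engineers the twist so that a \emph{tailored} instance of $\Sha^1_S=\Sha^1$ can be verified by comparison with the auxiliary places; your argument, as written, is missing this entire mechanism and instead assumes its conclusion.
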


If $G$ is constant,  $L=K$ and hence the only condition on places $v\in S$ is to be prime to the order of $G$. The Grunwald problem has then an affirmative answer for such $(G, K ,S)$. The family of iterated semidirect products of abelian groups contains the dihedral groups, the Heisenberg groups of order $p^3$,  and the $p$-Sylow subgroups of the symmetric groups, cf.~\cite{Weir}, of $\mathrm{GL}_n(\mathbb{F}_q)$, and of other classical groups over $\mathbb{F}_q$ when $q$ is prime to $p$, cf.~\cite{Kal}.\\

In an opposite scenario, when $S$ consists of the primes of $K$ dividing the order of $G$ and $G$ is constant, we give the following examples in which the approximation property doesn't hold. 
Recall that a {\it bicyclic group} is either cyclic or a direct product of two cyclic groups.

\begin{thm}\label{thm couterexample intro}
Let $K$ be a number field and $G$  a finite abelian $p$-group that is not bicyclic. Assume that $G$ is a Galois group over some completion of $K$ (which a fortiori divides $p$) and let $S$ be the finite set of places of $K$ lying above $p$. Then there exists an abelian $G$-module $A$ (of order a power of $p$) such that if we consider $E := A \rtimes G$ as a constant $K$-group and $K$ contains sufficiently many roots of unity, the map
\[\H^1(K,E) \to \prod_{v \in S} \H^1(K_v, E),\]
is not surjective.
\end{thm}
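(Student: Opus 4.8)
The plan is to deduce the asserted non-surjectivity from a transcendental Brauer--Manin obstruction on the homogeneous space $X:=\sln/E$ (for $n$ with $E\hookrightarrow\sln$); by the translations recorded in Sections~\ref{sec:hom spaces} and~\ref{sec:cohomology} this is equivalent to the statement, but in practice it is cleaner to produce the obstruction by hand in Galois cohomology. Consider the split extension $1\to A\to E\to G\to1$. Since $E$ is constant, a homomorphism $\gamma\colon\gal(K)\to G$ gives a finite $\gal(K)$-module ${}_\gamma A$ (action through $\gal(K)\xrightarrow{\ \gamma\ }G\to\aut(A)$), and the fibre of $\H^1(K,E)\to\hom(\gal(K),G)$ over $\gamma$ is a torsor under $\H^1(K,{}_\gamma A)$ --- nonempty, the extension being split; likewise over each $K_v$. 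Hence, after globalizing the $G$-part of a prescribed local family at $S$, the surjectivity in question reduces to weak approximation at $S$ for the modules ${}_\gamma A$, which by Poitou--Tate duality is governed by the Tate--Shafarevich group $\Sha^1_S\big(K,{}_\gamma A'\big)$ of the Cartier dual ${}_\gamma A'=\hom({}_\gamma A,\mu)$ (classes locally trivial outside $S$).

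We now choose $A$. Since $G$ is not bicyclic, $\dim_{\mathbb{F}_p}G/pG\ge3$; exploiting this, we choose a $G$-module $A$ of $p$-power exponent whose Pontryagin dual $\widehat A$ (with the contragredient $G$-action) has $\H^1(G,\widehat A)\ne0$ and, moreover, is rigid enough for the bookkeeping below --- informally, there are enough independent generators of $G$ that a nonzero class of $\H^1(G,\widehat A)$ survives, after inflation and twisting, as a global class that is trivial outside $S$. We take the $G$-action on $A$ nontrivial, so that $E$ is nonabelian; this is what will make the obstruction transcendental rather than of Grunwald--Wang type. The assumption that $K$ contains sufficiently many roots of unity enters twice: having $\mu_{p^{m}}\subset K$ for $p^m$ the exponent of $A$ identifies ${}_\gamma A'$ with ${}_\gamma\widehat A$; and having enough $2$-power roots of unity (for instance $\sqrt2\in K$ when $p=2$) puts $K$ outside the special case of Grunwald--Wang, so that the constant group $G$ itself has weak approximation at $S$ and only the new obstruction remains.

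To exhibit the obstruction, fix a place $v_0$ over which $G$ is a Galois group; then $v_0\mid p$, since for $v\nmid p$ the maximal pro-$p$ quotient of $\gal(K_v)^{\mathrm{ab}}$ is topologically $2$-generated and so has no non-bicyclic finite quotient. Prescribe a local family at $S$ by taking, at $v_0$, a lift along a surjection $\gamma_{v_0}\colon\gal(K_{v_0})\twoheadrightarrow G$ with a free choice of $A$-part $c_{v_0}\in\H^1(K_{v_0},{}_{\gamma_{v_0}}A)$, and the trivial class at the remaining places of $S$. Globalize the $G$-part to a surjection $\gamma\colon\gal(K)\twoheadrightarrow G$, unramified outside $S$, with $\gamma|_{v_0}=\gamma_{v_0}$ and trivial restriction elsewhere on $S$ (possible by Grunwald--Wang, using the roots of unity). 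By Poitou--Tate, the prescribed $A$-part globalizes along $\gamma$ if and only if $(c_v)_{v\in S}$ pairs trivially with every class of $\Sha^1_S(K,{}_\gamma\widehat A)$. Now all admissible $\gamma$ restrict to the same $\gamma_{v_0}$ at $v_0$, so the inflation of a fixed nonzero $\overline\xi\in\H^1(G,\widehat A)$ has restriction to $v_0$ equal to a fixed nonzero class $\eta=\gamma_{v_0}^{*}\overline\xi\in\H^1(K_{v_0},{}_{\gamma_{v_0}}\widehat A)$ (inflation is injective, $\gamma$ and $\gamma_{v_0}$ being surjective), and by the rigidity built into $A$ this inflation can be adjusted, by a class trivial at $v_0$, to lie in $\Sha^1_S(K,{}_\gamma\widehat A)$ while still restricting to $\eta$ at $v_0$. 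Choosing $c_{v_0}$ with $\langle c_{v_0},\eta\rangle\ne0$, which is possible as the local Tate pairing at $v_0$ is perfect, then blocks globalization for \emph{every} admissible $\gamma$, so the prescribed local family lies outside the image. Translated onto $X=\sln/E$, this is a Brauer--Manin obstruction for a class $\alpha\in\br X$ whose image in $\br\overline X\cong\H^2(E,\overline K^{\times})$ is nonzero and comes from the $\H^1(G,\widehat A)$-term of the Lyndon--Hochschild--Serre filtration for $1\to A\to E\to G\to1$; in particular $\alpha$ is transcendental.

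The main obstacle is exactly the rigidity invoked twice above: proving that, for every admissible global lift $\gamma$ of the $G$-part and every twist of the $A$-part by a class trivial on $S$, the obstruction cannot be evaded --- equivalently, that a class of $\Sha^1_S(K,{}_\gamma\widehat A)$ with the prescribed nonzero restriction $\eta$ at $v_0$ does exist. This is a group-cohomological robustness property of $G$ which holds because $G$ has at least three independent generators; for bicyclic $G$ it fails, and indeed one then expects such $E$ to have approximation at $S$ as well, consistently with Theorem~\ref{Main Thm Intro} and Harari's positive results. A secondary, bookkeeping, point is to confirm that the obstruction genuinely requires a transcendental class, i.e.\ that the algebraic Brauer group $\mathrm{Br}_1 X$ yields no obstruction here; this is where the non-abelianness of $E$ (the nontrivial $G$-action on $A$) is essential, and it is what makes these the first transcendental Brauer--Manin obstructions to weak approximation on homogeneous spaces.
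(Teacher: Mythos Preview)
Your overall strategy matches the paper's: twist by a global lift of the $G$-part, reduce to weak approximation for the twisted abelian kernel, and obstruct via Poitou--Tate using a nonzero class in $\Sha^1_S(K,{}_c\hat A)$. However, the two places where you invoke an unspecified ``rigidity'' are precisely the places where the paper supplies concrete arguments, and without them the proof is incomplete.

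The central gap is that you never identify the correct group-cohomological condition on $A$. The paper's key observation is this: for \emph{any} homomorphism $c\colon\gal(K)\to G$ and any place $v\notin S$ (so $v\nmid |G|$), the restriction $c|_{\gal(K_v)}$ factors through the tame quotient, which is $2$-generated; hence its image in the abelian group $G$ is bicyclic. Consequently, if one chooses $A$ so that
\[
\Sha^1_{\bic}(G,\hat A):=\ker\Bigl[\H^1(G,\hat A)\to\prod_{H\leq G\ \text{bicyclic}}\H^1(H,\hat A)\Bigr]
\]
is nonzero, then for \emph{every} surjective $c$ the inflation $c^*\colon\Sha^1_{\bic}(G,\hat A)\to\Sha^1_S(K,{}_c\hat A)$ is automatically well-defined and injective --- no adjustment ``by a class trivial at $v_0$'' is needed, and the argument works uniformly over all global lifts $c'$ of the $G$-part. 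You actually record the $2$-generation fact when explaining why $v_0\mid p$, but never use it here; instead you restrict to $\gamma$ unramified outside $S$, which does not cover the other lifts $c'$. The paper also gives an explicit $A$ realizing $\Sha^1_{\bic}(G,\hat A)\neq 0$ whenever $G$ is not bicyclic: take $\hat A$ to be the augmentation ideal of $(\z/n\z)[G]$.

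A second gap is your description of the fiber of $\H^1(K,E)\to\H^1(K,G)$ over $c$ as ``a torsor under $\H^1(K,{}_cA)$''. It is not: two classes in $\H^1(K,{}_cA)$ have the same image in $\H^1(K,{}_cE)$ if and only if they lie in the same orbit of $\H^0(K,G)=G$ acting by conjugation. Thus even after fixing $c'$, the local element $(\alpha'_v)$ coming from a putative global $\beta'$ need only agree with your $(i_v(\alpha_v))$ up to this $G$-action. The paper handles this in its Step~5 by showing that the $G$-action on $\H^1(K_v,{}_{c'}A)$ is dual to its action on $\H^1(K_v,{}_{c'}\hat A)$, and that $\Sha^1_S(K,{}_{c'}\hat A)$ is $G$-stable, so the non-orthogonality survives the orbit. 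Your sketch does not address this, and without it the conclusion ``blocks globalization for every admissible $\gamma$'' does not follow.
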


In contrast to Wang's counterexamples, here the set $S$ consists of the primes of $K$ dividing any given rational prime $p$, as requested in \cite[Section~1.6]{DG} by D\`ebes-Ghazi. Examples of constant $p$-groups that do not admit the approximation property at the places dividing a given prime $p$  were also given over $K=\q(\mu_p)$ by the first author in \cite[\S6, Prop. 2]{Demarche}, where an  algebraic Brauer-Manin obstruction to the approximation property is given. However, in contrast with \cite[\S5]{HarariBulletinSMF}, \cite{Demarche} and \cite[\S5.2]{GLABrnral1}, which study algebraic Brauer-Manin obstructions, Theorem~\ref{thm couterexample intro} provides examples in which the approximation property doesn't hold and the algebraic Brauer-Manin obstruction vanishes, giving \emph{the first examples} of a transcendental Brauer-Manin obstruction to weak approximation on homogeneous spaces, see Example \ref{Example}.

\subsection{Tame problems}
The above results suggest that the answer to the Grunwald problem is affirmative away from a particular set of bad primes:

\bigskip

\noindent {\bf Tame approximation problem.} Does the approximation property
hold for every finite $K$-group $G$ and every finite set $S$ of places
of $K$ that are prime to its order and are unramified  in an extension splitting $G$?

\bigskip

Here we do not consider the extension $\bb{C}/\bb{R}$ to be ramified, hence the question about \emph{real approximation} (i.e. having the approximation property for the set $S$ of archimedean places), asked years ago by Borovoi, is included in this last one.

Negative answers to these questions are unknown, and a complete affirmative answer is currently out of reach, as it implies a solution to the inverse Galois problem (see \cite[\S4]{HarariBulletinSMF}). In view of Theorem \ref{Main Thm Intro} and its proof, it seems reasonable to conjecture that the answer is affirmative for all solvable groups.

\subsubsection*{Acknowledgements}
The authors would like to thank David Harari for being at the origin of this collaboration. The third author would like to thank Pierre D\`ebes, David Saltman, Nguy\^e\~n Duy T\^an and Jack Sonn for helpful discussions.

The first author acknowledges the support of the French Agence Nationale de la Recherche (ANR) under reference ANR-12-BL01-0005. The second author's work was partially supported by the Fondation Math\'ematique Jacques Hadamard through the grant N\textsuperscript{o} ANR-10-CAMP-0151-02 in the ``Programme des Investissements d'Avenir''. The third author's work was supported by the National Science Foundation under Award No. DMS-1303990.

\section{Preliminaries}

\subsection{Global and local Galois groups}\label{section notations}
All throughout this text, $K$ denotes a number field, $\Omega_K$ is the set of places of $K$ and, for $v\in\Omega_K$, $K_v$ denotes the completion of $K$ in $v$. For any Galois extension $L/K$ we denote by $\gal(L/K)$ the corresponding Galois group, and by $\gal(K)$ the absolute Galois group.
Throughout the text, we fix an embedding of  $\gal(K_v)$ in $\gal(K)$ and identify it with its image for each $v\in\Omega_K$.

For a finite place  $v\in\Omega_K$, let $K_v^{\tr}$ be the maximal tamely ramified extension of $K_v$, let $\Gamma_v:=\gal(K_v^\tr/K_v)$ be the tame Galois group and  $W_v:=\gal(K_v^\tr)$ the wild ramification group. Recall that the tame inertia group $T_v:=\gal(K_v^\tr/K_v^{\un})$ is a  procyclic normal subgroup of $\Gamma_v$ of order prime to $v$; Let $\tau_v$ be a generator of $T_v$. The quotient $\Gamma_v/T_v\cong \gal(K_v^\un/K_v)\cong \hat{\mathbb Z}$ is generated by a lift $\overline \sigma_v$ of the Frobenius automorphism.

We let $\sigma_v\in \Gamma_v$ be a preimage $\overline \sigma_v$. Note that $\sigma_v$ is defined up to conjugation and that its action by conjugation on $T_v$ is equivalent to its action on roots of unity, that is, $\sigma_v\tau_v\sigma_v^{-1}=\tau_v^{q_v}$ where $q_v$ is the cardinality of the residue field of $K_v$, see \cite[\S 7.5]{NSW}.

For  archimedean  $v$, let $\sigma_v$ be the generator of $\gal(K_v)$ and  put $\tau_v=1$, so that $\Gamma_v:=\gal(K_v)$ and $T_v=\langle\tau_v\rangle=\{1\}$.\\

\subsection{$K$-groups}
A finite $K$-group $G$  is a finite group scheme over $K$. Since $K$ is of characteristic 0, there is an equivalence of categories between finite group schemes over $K$ and finite $\gal(K)$-groups. Identifying the two, we shall write $G$ for the set of its geometric points. An extension $L/K$ is said to \emph{split} $G$ if $G\times_K L$ is a constant $L$-group, i.e. if the Galois group $\gal(L)\subset\gal(K)$ acts trivially on~$G$. We also need the following notion of ``bad places'' for such $G$:

\begin{defi}\label{definition bad places}
Let $K$ be a number field, $G$ be a finite $K$-group and $L/K$ the minimal extension splitting $G$. We define the set of ``bad places'' $\bad_G\subset\Omega_K$ as the union of the set of places dividing the order of $G$ and the places ramified in~$L/K$.
\end{defi}

Note that the minimal extension splitting $G$ always exists. Indeed, the action of $\gal(K)$ on $G$ is a (continuous) morphism $\gal(K)\to\aut(G)$ and the minimal extension splitting $G$ is given by the kernel of this morphism. 

\subsection{Cohomology} \label{sec:cohomology}
Recall \cite[I, \S5]{SerreCohGal} that for a field $K$ and  a $K$-group~$G$, the  set $\H^1(K,G)$ is defined as the quotient of the set of 1-cocycles (also called \emph{crossed homomorphisms})
\[Z^1(K,G):=\{a:\gal(K)\to G\,\text{ continuous}\, |\,a_{\sigma\tau}=a_\sigma\asd{\sigma}{}{}{\tau}{a},\,\,\forall\, \sigma,\tau\in\gal(K)\},\]
by the equivalence
\[a\sim b\,\Leftrightarrow\,\exists\, g\in G\text{ such that } a_\sigma=gb_\sigma\asd{\sigma}{}{-1}{}{g},\,\,\forall\,\sigma\in\gal(K).\]
Hence for a  \emph{constant} $K$-group $G$, the set $\H^1(K,G)$ is the set of continuous group homomorphisms $\gal(K)\to G$ modulo conjugation in $G$. In particular, the Grunwald problem for $(G,K,S)$ is equivalent to the approximation property for $G$ constant.
If $L/K$ is an extension splitting $G$, then $\gal(L/K)$ acts on $G$ and one may similarly define the set $\H^1(L/K,G)$, which embeds into $H^1(K,G)$ by inflation.

A class $\alpha\in \H^1(K,G)$ is denoted by $[a]$ if it corresponds to the class of the cocycle $a$. For $\alpha\in \H^1(K,G)$, we denote by $\alpha_v$ its image under the restriction map $\res_v:\H^1(K,G)\to \H^1(K_v,G)$.

\subsection{Twisting}\label{section twisting}
We recall briefly the basic properties of twisting. For further details see \cite[\S I.5]{SerreCohGal}.
Let $\Gamma$ be a profinite group and $G$ be a discrete $\Gamma$-group. Assume that $G$ and $\Gamma$ act both on the left on some object $X$ in a compatible way, that is, $\asd{\sigma}{}{}{}{(}g\cdot x)=\asd{\sigma}{}{}{}{g}\cdot\asd{\sigma}{}{}{}{x}$ for $\sigma\in\Gamma$, $g\in G$, $x\in X$. Given a cocycle $a\in Z^1(\Gamma,G)$,  define a new action of $\Gamma$ on $X$ by \emph{twisting} the first action as follows:
\[\asd{\sigma*}{}{}{}{x}:=a_\sigma\cdot\asd{\sigma}{}{}{}{x}.\]
Such a twisted object, denoted by $\asd{}{a}{}{}{X}$,  still has an action of $G$. However, the latter is not necessarily  compatible with the action of $\Gamma$. To fix this, one twists also the action of $\Gamma$ on $G$. To do so it suffices to view $G$ as acting on itself by conjugation, and repeat the same construction, so that
\[\asd{\sigma*}{}{}{}{g}:=a_\sigma\asd{\sigma}{}{}{}{g}a_\sigma^{-1}.\]
This twist of $G$, denoted by $\asd{}{a}{}{}{G}$, is a $\Gamma$-group which acts on $\asd{}{a}{}{}{X}$ compatibly with the $\Gamma$-action.

This construction takes principal homogeneous spaces under $G$ to principal homogeneous spaces under $\asd{}{a}{}{}{G}$. More precisely, one has \cite[I.5.3, Prop. 35bis]{SerreCohGal}:

\begin{pro}\label{prop twisting}
Let $a\in Z^1(\Gamma,G)$ and let $G'=\asd{}{a}{}{}{G}$. The map
\[t_a:Z^1(\Gamma,G')\to Z^1(\Gamma,G):a'\mapsto (\sigma\mapsto a'_\sigma a_\sigma),\]
is a bijection which moreover induces a bijection
\[\tau_a:\H^1(\Gamma,G')\to \H^1(\Gamma,G),\]
sending the trivial element of $\H^1(\Gamma,G')$ to the class of $a$ in $\H^1(\Gamma,G)$.
\end{pro}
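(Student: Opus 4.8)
The statement to prove is Proposition~\ref{prop twisting}, the standard twisting bijection from Serre. Let me plan a proof.

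The plan is to verify directly that the map $t_a$ is a well-defined bijection on cocycles, and then check it descends to cohomology with the claimed behavior on the trivial class.

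First I would check that $t_a$ lands in $Z^1(\Gamma, G)$: given a cocycle $a' \in Z^1(\Gamma, G')$, I need to confirm that $\sigma \mapsto a'_\sigma a_\sigma$ satisfies the cocycle condition for the \emph{untwisted} action of $\Gamma$ on $G$. This is a direct computation: expanding $a'_{\sigma\tau} a_{\sigma\tau}$ using the cocycle relation for $a'$ with respect to the twisted action ${}^{\sigma*}g = a_\sigma\, {}^\sigma g\, a_\sigma^{-1}$, and the cocycle relation for $a$, one finds the two contributions telescope to $(a'_\sigma a_\sigma)\, {}^\sigma(a'_\tau a_\tau)$. Conversely, one constructs $t_a^{-1}(b)_\sigma := b_\sigma a_\sigma^{-1}$ and checks by the same kind of bookkeeping that it lies in $Z^1(\Gamma, G')$; the two constructions are visibly mutually inverse as maps of sets, so $t_a$ is a bijection on cocycles.

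Next I would show $t_a$ respects the coboundary equivalence, so that it induces a map $\tau_a$ on $\H^1$. If $a', b' \in Z^1(\Gamma, G')$ are cohomologous via $g \in G' = G$ (as a set), i.e.\ $a'_\sigma = g\, b'_\sigma\, ({}^{\sigma*}g)^{-1}$, then substituting ${}^{\sigma*}g = a_\sigma\, {}^\sigma g\, a_\sigma^{-1}$ and multiplying on the right by $a_\sigma$ shows that $t_a(a')$ and $t_a(b')$ are cohomologous via the same $g$ in $Z^1(\Gamma, G)$. Since $t_a$ is a bijection on cocycles compatible with the equivalence relations in both directions (the inverse argument is symmetric), $\tau_a$ is a bijection on cohomology sets. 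Finally, $t_a$ sends the trivial cocycle $\sigma \mapsto 1$ of $Z^1(\Gamma, G')$ to $\sigma \mapsto a_\sigma$, which is exactly the cocycle $a$; hence $\tau_a$ sends the trivial class to $[a]$.

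The only mild subtlety — and the place to be careful rather than the place of real difficulty — is keeping the twisted and untwisted actions straight in the cocycle manipulations, since $G'$ has the same underlying group as $G$ but a different $\Gamma$-action; one must consistently use ${}^{\sigma*}(-)$ when checking relations in $Z^1(\Gamma, G')$ and ${}^\sigma(-)$ when checking relations in $Z^1(\Gamma, G)$. There is no essential obstacle: this is a formal verification, and I would simply cite \cite[I.5.3, Prop.~35bis]{SerreCohGal} for the details while indicating the computation above.
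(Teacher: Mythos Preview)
Your proposal is correct. The paper does not give its own proof of this proposition at all: it simply quotes the result from \cite[I.5.3, Prop.~35bis]{SerreCohGal}. Your sketch of the direct verification is exactly the standard argument found there, and you even conclude by citing the same reference, so there is nothing to compare.
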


An exact sequence of $\Gamma$-groups such as $1\to H\to E\to G\to 1$ gives rise to an exact sequence of pointed sets, cf.~\cite[I.5.5, Prop. 38]{SerreCohGal}:
\begin{equation}\label{equ:H1}
\H^1(\Gamma,H)\to \H^1(\Gamma,E)\to \H^1(\Gamma,G),
\end{equation}
which means that the elements in $\H^1(\Gamma,E)$ falling onto the trivial element of $\H^1(\Gamma,G)$ are precisely those coming from $\H^1(\Gamma,H)$.

Let $e\in Z^1(\Gamma,E)$ and let $g$ be its image in $Z^1(\Gamma,G)$. To study the fiber of the class $[e]\in \H^1(\Gamma,E)$, that is, the fiber above $[g]\in \H^1(\Gamma,G)$, one uses twisting. Since $E$ acts on $E$, on $G$ and on $H$ by conjugation, one may twist all three groups by $e$, getting an exact sequence $1\to \asd{}{e}{}{}{H}\to \asd{}{e}{}{}{E}\to \asd{}{e}{}{}{G}\to 1$ and the following commutative diagram of pointed sets with exact sequences:
\[\xymatrix{
\H^1(\Gamma,H) \ar[r] & \H^1(\Gamma,E) \ar[r] \ar[d]^{\tau_e^{-1}} & \H^1(\Gamma,G) \ar[d]^{\tau_g^{-1}} \\
\H^1(\Gamma,\asd{}{e}{}{}{H}) \ar[r] & \H^1(\Gamma,\asd{}{e}{}{}{E}) \ar[r] & \H^1(\Gamma,\asd{}{g}{}{}{G}).
}\]
Note that the twisted form of $G$ is denoted by $\asd{}{g}{}{}{G}$ instead of $\asd{}{e}{}{}{G}$, since $E$ acts on $G$ via its own image in $G$. Since the lower row is exact and since the $\tau$'s are bijections sending $[g]$ and $[e]$ to the trivial elements, we now know that the fiber over $[g]$ is in bijection with the image of the set $\H^1(\Gamma,\asd{}{e}{}{}{H})$ in $\H^1(\Gamma,\asd{}{e}{}{}{E})$.
Note also that in general there is no vertical arrow at the level of $H$ (as Proposition \ref{prop twisting} applies only to twists by a group acting on itself by conjugation).

Two elements in $\H^1(\Gamma,H)$ are mapped under \eqref{equ:H1} to the same element of $\H^1(\Gamma,E)$ if and only if they lie in the same orbit of $\coker(\H^0(\Gamma,E)\ra \H^0(\Gamma, G))$, \cite[I.5.5, Prop. 39]{SerreCohGal}. Here an element $g\in \H^0(\Gamma,G) = G^{\Gamma}$ acts on $\H^1(\Gamma, H)$ by sending a class $\alpha\in\H^1(\Gamma,H)$ to
\begin{equation}\label{equ:H0-act}
(g\cdot \alpha)_\sigma = e \alpha_\sigma \asd{\sigma}{}{}{}{e^{-1}}
\end{equation}
where $e\in E$ is a preimage of $g$.

\subsection{Homogeneous spaces}\label{sec:hom spaces} Let us now recall the notion of weak approximation for $K$-varieties. Let $X$ then be a (smooth, geometrically integral) $K$-variety such that $X(K)\neq \emptyset$.

\begin{defi}
Let $S\subset\Omega_K$ be a finite set of places of $K$. We say that $X$ has \emph{approximation in $S$} if $X(K)$ is dense in the product $\prod_{S} X(K_v)$.

We say that $X$ has \emph{weak approximation away from $T\subset\Omega_K$} if $X$ has approximation in $S$ for all finite $S\subset\Omega_K\smallsetminus T$. Equivalently, one can demand $X(K)$ to be dense in the product $\prod_{\Omega_K\smallsetminus T}X(K_v)$. The set $T$ is usually refered to as the set of ``bad places''.

We say that $X$ has \emph{weak approximation} if one can take $T=\emptyset$.
\end{defi}

Let us  briefly recall the Brauer-Manin obstruction to weak approximation, cf.~\cite[\S 5.1]{Skor} for details. For $X$ a (smooth, geometrically integral) $K$-variety, consider its unramified Brauer group $\brun X$, as well as the subgroup of its ``algebraic'' elements $\brunal X:=\ker[\brun X\to\brun(X\times_K \bar K)]$. Denote, for $v\in\Omega_K$, $\inv_v:\br K_v \to \q/\z$ the Hasse invariant map. Finally, denote by $X(K_\Omega)$ the product $\prod_{\Omega_K}X(K_v)$ in which $X(K)$ embeds diagonally. Then one can define the set $X(K_\Omega)^\brun$ (resp. $X(K_\Omega)^{\brunal}$) as the subset of all families of points $(P_v)_{v\in\Omega_K}\in X(K_\Omega)$ such that
\[\sum_{v\in\Omega_K}\inv_v(\alpha(P_v))=0,\text{ for all } \alpha\in\brun X\text{ (resp. } \alpha \in\brunal X),\]
where $\alpha(P_v)\in\br K_v$ denotes the evaluation of $\alpha$ at the point $P_v$ and the sum, which a priori is infinite, is actually finite for elements $\alpha\in\brun X$. The following inclusions then hold:
\[\overline{X(K)}\subseteq X(K_\Omega)^{\brun}\subseteq X(K_\Omega)^{\brunal}\subseteq X(K_\Omega).\]
If $X(K_\Omega)^{\brun}\neq X(K_\Omega)$ (resp. $X(K_\Omega)^{\brunal}\neq X(K_\Omega)$) one says that there is a Brauer-Manin obstruction (resp. an algebraic Brauer-Manin obstruction) to weak approximation. A Brauer-Manin obstruction that is not algebraic is called transcendental.\\

A conjecture by Colliot-Th\'el\`ene \cite[Introduction]{CT} says that the Brauer-Manin obstruction to weak approximation should be the only obstruction for rationally connected varieties, that is, $\overline{X(K)}= X(K_\Omega)^{\brun}$. Now, given a finite $K$-group~$G$, one can always embed it into $\sln$ for some $n$ and consider the homogeneous space $X=\sln/G$, which is unirational (since $\sln$ is itself a rational variety) and hence rationally connected. In \cite[Thm. 1]{HarariBulletinSMF}, Harari proved that if $G$ is an iterated semidirect product of abelian groups, then we do have $\overline{X(K)}=X(K_\Omega)^{\brun}$. The fact that this theorem implies the approximation property away from a finite set $T$ of places as we claimed in \S \ref{sec:intro} follows from the finiteness of $\brun X/\br K$, cf.~\cite[Prop.~5.1(iii)]{ColliotBrnr}, and from \cite[\S 1.2]{HarariBulletinSMF} or  \cite[\S 1]{GLA-AFPHEH}:

\begin{pro}\label{prop WA and H1}
Let $G$ a be $K$-group embedded into $\sln$ and put $X:=\sln/G$. Let $S\subset\Omega_K$ be a finite set of places of $K$. Then $X$ has approximation in $S$ if and only if the natural map
\[\H^1(K,G)\to\prod_{v\in S}\H^1(K_v,G),\]
is surjective.
\end{pro}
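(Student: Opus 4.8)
The plan is to exploit that $\pi\colon\sln\to X$ is a (right) $G$-torsor and run the standard descent/twisting comparison between rational points of $X$ and the cohomology set $\H^1(-,G)$. For every field extension $F/K$ the torsor $\pi$ produces an exact sequence of pointed sets
\[\sln(F)\to X(F)\xrightarrow{\ \beta_F\ }\H^1(F,G)\to\H^1(F,\sln),\]
and since $\H^1(F,\sln)=1$ by Hilbert~90, the characteristic map $\beta_F$ is \emph{surjective}. For $F=K$ and $F=K_v$ ($v\in S$) these maps fit into a commutative square with the restriction maps $X(K)\to\prod_{v\in S}X(K_v)$ and $\H^1(K,G)\to\prod_{v\in S}\H^1(K_v,G)$, so the proposition compares density of the image of the former with surjectivity of the latter.

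The argument rests on two geometric facts. First, I would recall the twisting description of the fibres of $\beta_F$: given $\alpha\in\H^1(K,G)$ represented by a cocycle $a$, the twisted torsor $Y:=\asd{}{a}{}{}{\sln}\to X$ satisfies $\beta_F^{-1}(\alpha|_F)=\im\big(Y(F)\to X(F)\big)$ for all $F/K$. Moreover $G\subset\sln$ acts on $\sln$ by right translations, which commute with the left-translation action of $\sln$; hence the left $\sln$-torsor structure survives the twist unchanged, so $Y$ is a principal homogeneous space under $\sln$, and therefore trivial: $Y\cong\sln$ over $K$. In particular $Y$ has a $K$-point, $Y$ is $K$-rational and so has weak approximation, and the smooth surjection $Y\to X$ is open on $K_v$-points (submersion theorem over local fields). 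Second, and as a consequence of the first fact applied over $K_v$, the fibres of $\beta_{K_v}\colon X(K_v)\to\H^1(K_v,G)$ are open; since they partition $X(K_v)$ each is also closed, so $\beta_{K_v}$ is locally constant.

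Granting this, both implications are formal. For ($\Leftarrow$): given $(P_v)_{v\in S}\in\prod_{v\in S}X(K_v)$, put $\alpha_v:=\beta_{K_v}(P_v)$, choose by hypothesis a global $\alpha\in\H^1(K,G)$ with $\alpha|_v=\alpha_v$ for $v\in S$, and form $Y=\asd{}{a}{}{}{\sln}$. As $P_v\in\beta_{K_v}^{-1}(\alpha_v)=\im\big(Y(K_v)\to X(K_v)\big)$, lift it to $Q_v\in Y(K_v)$; weak approximation on $Y\cong\sln$ yields $Q\in Y(K)$ close to $(Q_v)_{v\in S}$, whose image $P\in X(K)$ is close to $(P_v)_{v\in S}$ since $Y\to X$ is continuous. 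Thus $X$ has approximation in $S$. For ($\Rightarrow$): given $(\alpha_v)_{v\in S}$, use surjectivity of each $\beta_{K_v}$ to pick $P_v\in X(K_v)$ with $\beta_{K_v}(P_v)=\alpha_v$, approximate $(P_v)_{v\in S}$ by $P\in X(K)$, and note that $\beta_{K_v}$ being locally constant forces $\beta_{K_v}(P)=\alpha_v$ once $P$ is sufficiently close; hence $\beta_K(P)\in\H^1(K,G)$ restricts to $(\alpha_v)_{v\in S}$.

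The one place requiring care is the first geometric fact: setting up the twisting of the torsor $\pi$ precisely enough to identify the fibres of the characteristic map with images of twisted torsors, and recognising those twisted torsors as ($K$-rational) forms of $\sln$ — together with the topological input that smooth surjective morphisms of $K_v$-varieties are open on $K_v$-points, which is what transports approximation between $X$, $Y$ and the pointed sets $\H^1$. All of this is standard; see \cite[\S I.5]{SerreCohGal}, \cite{Skor}, \cite[\S 1.2]{HarariBulletinSMF} and \cite[\S 1]{GLA-AFPHEH}, so I would keep this step brief and reference-heavy.
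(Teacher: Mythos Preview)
Your argument is correct and is exactly the standard proof. Note, however, that the paper does not supply its own proof of this proposition at all: it merely states the result and refers the reader to \cite[\S 1.2]{HarariBulletinSMF} and \cite[\S 1]{GLA-AFPHEH}. What you have written is precisely a clean unpacking of those references (torsor exact sequence, Hilbert~90 for $\sln$, twisting to identify fibres of the characteristic map with images of $K$-rational twisted torsors, openness of these images via the submersion theorem, and the two formal implications), so there is no divergence in approach to speak of---you have simply spelled out what the paper leaves to citation.
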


Note that this result proves in particular that approximation properties for varieties such as $X=\sln/G$ depend only on $G$, i.e. they are independent of the embedding of $G$ and even on the dimension of $X$ since there is no condition on $n$, justifying the definition of the approximation property in \S \ref{sec:intro}.

The tame approximation problem is thus equivalent to determining whether $G$ (or $X=\sln/G$) has weak approximation away from $T=\bad_G$. Note that a positive answer to this question does not necessarily imply a positive answer to Colliot-Th\'el\`ene's conjecture for $X=\sln/G$. Conversely, if the conjecture were true, we would only get the existence of a finite set $T_\br$ of bad places. However, not enough is known on the unramified Brauer group of $X$ in order to show that $T_\br=\bad_G$.

\subsection{Poitou-Tate}\label{section Poitou-Tate}
We  next recall the obstruction to weak approximation for finite abelian $K$-groups.
Let $A$ be a finite abelian $K$-group and let $\hat A=\hom(A,\gm)$ be its Cartier dual, which is also a finite abelian $K$-group.

Let $K_v^\un$ denote the  maximal unramified extension and $K_v^\tr$ its maximal tamely ramified extension.
Recall  \cite[II, \S6]{SerreCohGal} that for every finite place $v$ such that $\gal(K_v^\un)$ acts trivially on $A$, the unramified cohomology $\H^1_\un(K_v,A)$ is defined as the image of the group $\H^1(K_v^\un/K_v,A)$ under inflation to $\H^1(K_v,A)$. One can consider then the restricted product $\rprod_{\Omega_K}\H^1(K_v,A)$ with respect to these subgroups. It is well known that the product of the restriction maps $\res_v$ sends $\H^1(K,A)$ into $\rprod_{\Omega_K}\H^1(K_v,A)$. A classical result by Poitou and Tate \cite[II.6.3]{SerreCohGal} gives a perfect pairing
\begin{equation}\label{eq Poitou-Tate}
\rprod_{\Omega_K}\H^1(K_v,A)\times \rprod_{\Omega_K}\H^1(K_v,\hat A)\to\q/\z,
\end{equation}
defined via local pairings and such that the image of $\H^1(K,A)$ is the orthogonal complement of the image of $\H^1(K,\hat A)$ for this pairing. A classic consequence of these is the following well-known proposition  \cite[Lem. 9.2.2]{NSW}.

For a subset $S$ of $\Omega_K$, let $\Sha^1_S(K,G):=\{\alpha\in \H^1(K,G)\,|\,\alpha_v=1,\,\forall\,\, v\not\in S\}$, and let $\Sha^1(K,G):=\Sha^1_\emptyset(K,G)$.

\begin{pro}\label{Poitou-Tate lemma}
Let $A$ be a finite abelian $K$-group and $S\subset\Omega_K$ be a finite set of places. Then there is a pairing
\[\prod_{v\in S}\H^1(K_v,A)\times\Sha^1_S(K,\hat A)\to\q/\z,\]
such that its right kernel is $\Sha^1(K,\hat A)$ and its left kernel is the image of the restriction map
\[\H^1(K,A)\to \prod_{v\in S}\H^1(K_v,A)\]
In particular, $A$ has approximation in $S$ if and only if $\Sha^1_S(K,\hat A)=\Sha^1(K,\hat A)$.
\end{pro}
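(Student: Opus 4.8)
The plan is to deduce the proposition from the Poitou--Tate pairing \eqref{eq Poitou-Tate} by keeping careful track of the subgroups ``supported on $S$''. Write $M:=\rprod_{\Omega_K}\H^1(K_v,A)$ and $M^\vee:=\rprod_{\Omega_K}\H^1(K_v,\hat A)$, and let $\langle\,,\,\rangle\colon M\times M^\vee\to\q/\z$ denote the pairing of \eqref{eq Poitou-Tate}; it is perfect, it is computed placewise from the local pairings, and $I:=\im(\H^1(K,A)\to M)$ and $I^\vee:=\im(\H^1(K,\hat A)\to M^\vee)$ are mutual orthogonal complements for it. As $S$ is finite, extension by the trivial class identifies $\prod_{v\in S}\H^1(K_v,A)$ with the subgroup $P_S\subset M$ of tuples vanishing off $S$, and $M=P_S\oplus N$, where $N$ consists of the tuples vanishing on $S$; likewise $M^\vee=P_S^\vee\oplus N^\vee$. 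Since the pairing is placewise and the local pairings are perfect, $N^\vee=P_S^\perp$, $P_S^\vee=N^\perp$, and the pairing restricts to perfect pairings $P_S\times P_S^\vee\to\q/\z$ and $N\times N^\vee\to\q/\z$. Finally, identifying $\beta\in\H^1(K,\hat A)$ with $\res(\beta):=(\res_v\beta)_v\in M^\vee$, we have $\ker(\res)=\Sha^1(K,\hat A)$ and, straight from the definitions, $\res\big(\Sha^1_S(K,\hat A)\big)=I^\vee\cap P_S^\vee$.

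The pairing of the proposition is then $(\alpha,\beta)\mapsto\langle\alpha,\res(\beta)\rangle$ for $\alpha\in P_S$ and $\beta\in\Sha^1_S(K,\hat A)$, which makes sense since $\res(\beta)\in P_S^\vee$. Its right kernel is immediate: $\beta$ enters only through $\res(\beta)\in P_S^\vee$ and $P_S\times P_S^\vee$ is perfect, so $\beta$ lies in the right kernel iff $\res(\beta)=0$, i.e.\ iff $\beta\in\Sha^1(K,\hat A)$. For the left kernel, since $\res\big(\Sha^1_S(K,\hat A)\big)=I^\vee\cap P_S^\vee$, an $\alpha\in P_S$ lies in the left kernel iff $\alpha\in(I^\vee\cap P_S^\vee)^\perp$. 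The crux is the identity $(I^\vee\cap P_S^\vee)^\perp=I+N$: one has $I=(I^\vee)^\perp$ by Poitou--Tate and $N=(P_S^\vee)^\perp$ trivially, and $I+N$ is closed in $M$ (it contains $N$, and $M/N\cong P_S$ is finite), so the standard locally compact duality $(B_1\cap B_2)^\perp=B_1^\perp+B_2^\perp$, valid for closed $B_i$ with closed sum, applies. Granting this, for $\alpha\in P_S$ one checks that $\alpha\in I+N$ iff $\alpha$ agrees, at every $v\in S$, with the projection to $P_S$ of some $i\in I$ --- equivalently, iff $\alpha$ lies in the image of $\H^1(K,A)\to\prod_{v\in S}\H^1(K_v,A)$. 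So the left kernel is precisely that image.

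The last assertion follows formally. If $\Sha^1_S(K,\hat A)=\Sha^1(K,\hat A)$, then every class of $\Sha^1_S(K,\hat A)$ lies in the right kernel, so the pairing is identically $0$, so the left kernel is all of $P_S$, i.e.\ $\H^1(K,A)\to\prod_{v\in S}\H^1(K_v,A)$ is surjective --- which is exactly the statement that $A$ has approximation in $S$. Conversely, if that map is surjective the left kernel is all of $P_S$, the pairing vanishes, hence $\Sha^1_S(K,\hat A)$ is contained in the right kernel $\Sha^1(K,\hat A)$, and the reverse inclusion being automatic we get equality.

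I expect the one delicate point to be the topological input used for the identity $(I^\vee\cap P_S^\vee)^\perp=I+N$: one must know that $I=\im(\H^1(K,A)\to M)$ is closed --- which it is, being an orthogonal complement --- and that $I+N$ is closed, so that this double-orthogonality holds on the nose rather than up to closure. Everything else is a routine unwinding of \eqref{eq Poitou-Tate}.
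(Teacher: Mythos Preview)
The paper does not actually prove this proposition: it is stated as a ``classic consequence'' of the Poitou--Tate pairing and attributed to \cite[Lem.~9.2.2]{NSW} without further argument. Your write-up is a correct and complete derivation of exactly this kind, deducing the statement from \eqref{eq Poitou-Tate} via the decomposition $M=P_S\oplus N$ and the duality identity $(I^\vee\cap P_S^\vee)^\perp=I+N$. Your handling of the one genuinely nontrivial step---closedness of $I+N$, obtained from the finiteness of $M/N\cong P_S$---is sound, and the rest is indeed routine unwinding. In short: there is nothing to compare against, and your argument stands on its own.
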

We shall use the following property of $\Sha^1_S(K,\cdot)$ to descend to finite extensions:

\begin{lem}\label{Lemma on Sha}
Let $A$ be a finite abelian $K$-group. Let $L/K$ be a Galois extension splitting $A$. Then for any finite $S\subset\Omega_K$, $\Sha^1_S(K,A)$ is contained in the image of $\H^1(L/K,A)$ by the inflation map.
\end{lem}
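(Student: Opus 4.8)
The plan is to show that any class $\alpha \in \Sha^1_S(K,A)$ is split by $L$, i.e.\ that its restriction $\res_L(\alpha) \in \H^1(L,A)$ is trivial; by the inflation--restriction exact sequence $1 \to \H^1(L/K,A) \to \H^1(K,A) \to \H^1(L,A)$ this gives exactly that $\alpha$ comes from $\H^1(L/K,A)$ by inflation. Since $L$ splits $A$, the $\gal(L)$-action on $A$ is trivial, so $\H^1(L,A) = \hom(\gal(L),A)$ is an honest group of homomorphisms, and we must prove that $\res_L(\alpha)$ is the zero homomorphism.

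First I would reduce to a local computation. Fix a place $v \in \Omega_K$ with $v \notin S$, and pick a place $w$ of $L$ above $v$. By functoriality of restriction, $\res_w(\res_L(\alpha)) = \res_{L_w/K_v}(\res_v(\alpha))$, and since $\alpha \in \Sha^1_S(K,A)$ and $v \notin S$, we have $\res_v(\alpha) = 1$, hence $\res_w(\res_L(\alpha)) = 1$ for every place $w$ of $L$ lying over a place outside $S$. Now the key point: the homomorphism $\res_L(\alpha) \colon \gal(L) \to A$ has image in the finite abelian group $A$, so it factors through a finite abelian extension $M/L$; the places of $L$ that ramify in $M$ or whose Frobenius generates a nontrivial subgroup must be controlled. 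Concretely, I would invoke Chebotarev: the set of places $w$ of $L$ outside the (finite) set of places over $S$ still has Frobenius elements that are dense in the relevant Galois group, and since $\res_w(\res_L(\alpha)) = 1$ forces the decomposition group at $w$ to lie in the kernel of $\res_L(\alpha)$, we get that a dense set of Frobenius elements lies in $\ker(\res_L(\alpha))$, forcing $\ker(\res_L(\alpha)) = \gal(L)$, i.e.\ $\res_L(\alpha) = 0$.

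A cleaner way to package the last step, avoiding the subtlety that $\res_w$ only sees the decomposition group: the composite homomorphism $\gal(L) \to A$ corresponds, by class field theory for the abelian group $A$ (or simply because $A$ is finite abelian so the homomorphism factors through the maximal abelian exponent-$n$ quotient of $\gal(L)$), to an element of $\H^1(L,A)$ that is \emph{unramified outside the finitely many places of $L$ over $S$} and \emph{trivial at every place outside $S$}; but a class in $\H^1$ of a number field that is locally trivial at all but finitely many places and locally trivial at those remaining finitely many places as well is globally trivial --- this is precisely $\Sha^1_{S_L}(L,A)$ with the extra vanishing at $S_L$, which is $\Sha^1(L,A)$, and for the \emph{constant} module $A$ over $L$ we need $\Sha^1(L,A) = \Sha^1_{S_L}(L,A)$. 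Here I would instead argue directly: $\res_L(\alpha)$ is a homomorphism $\gal(L)\to A$ vanishing on every decomposition group $\gal(L_w)$ for $w \notin S_L$; since these decomposition groups (their union over all such $w$) topologically generate $\gal(L)$ --- again by Chebotarev, as only finitely many places are excluded --- the homomorphism is zero.

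The main obstacle is the final density argument: one must be careful that excluding the finitely many places of $L$ above $S$ does not remove enough Frobenius classes to break surjectivity onto $\gal(M/L)$ for the relevant finite abelian $M/L$ cut out by $\res_L(\alpha)$. This is exactly where Chebotarev's density theorem (or the Poitou--Tate statement $\Sha^1(L,A)$ with finitely many places removed) is essential, and it is the only nontrivial input; everything else is formal manipulation of inflation--restriction and functoriality. I would also need to note that $L/K$ is a finite extension so $S_L := \{w \in \Omega_L : w \mid v,\ v \in S\}$ is finite, which is what lets Chebotarev apply over $L$.
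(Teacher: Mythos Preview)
Your proposal is correct and follows essentially the same route as the paper: use inflation--restriction to reduce to showing $\res_L(\alpha)=0$ in $\H^1(L,A)=\hom(\gal(L),A)$, observe that this homomorphism is locally trivial at all but the finitely many places of $L$ above $S$, and conclude by Chebotarev that it vanishes. The paper's proof is exactly this argument, packaged via a commutative diagram; your remark that one needs $L/K$ finite (so that $S_L$ is finite and Chebotarev applies over $L$) is a fair observation that the paper leaves implicit.
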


\begin{proof}
Since $\gal(L)$ acts trivially on $A$, we have the following commutative diagram with exact rows:
\[\xymatrix{
1\ar[r] & \H^1(L/K,A) \ar[r]^{\inf} & \H^1(K,A) \ar[d] \ar[r]^\res & \H^1(L,A) \ar[d] \\
&& \displaystyle{\prod_{v\in \Omega_K} \H^1(K_v,A)} \ar[r]^\res & \displaystyle{\prod_{w\in \Omega_{L}} \H^1(L_w,A)}.
}\]
Since $L$ splits $A$, the $\H^1$'s on the right hand side of the diagram are actually groups of homomorphisms. Restricting a class $\alpha\in\Sha^1_S(K,A)$  to $\H^1(L,A)$, we get a homomorphism $\gal(L)\to A$ that is trivial  everywhere locally except for the finitely many $w\in\Omega_{L}$ that lie above $S\subset\Omega_K$. By Chebotarev's density theorem it is the trivial morphism. Thus by exactness, $\alpha$ comes from $\H^1(L/K,A)$.
\end{proof}

\section{Reocurrence}\label{section reocurrence}
The main idea in proving Theorem \ref{Main Thm Intro} is to show that, given a finite $K$-group~$G$ of order $n$ and a finite place $v\not\in\bad_G$, there are infinitely many other places $w$ such that $\H^1(K_v,G)\cong \H^1(K_w,G)$. In the particular case of a constant group, this infinite set of places will only depend on $v$ and $n$. Hence, for a place $v$ prime to $n$, a group $G$ of order $n$ will appear as a Galois group over $K_v$ if and only if it appears over $K_w$.

Let us start by proving that, for finite places not in $\bad_G$, the set $\H^1(K_v,G)$ depends only on the tamely ramified part $\Gamma_v = \gal(K_v^\tr/K_v)$. Recall that $\Gamma_v$ is generated by a lift $\sigma_v$ of the Frobenius and a generator of the inertia group $\tau_v$, as in \S\ref{section notations}. For every place $v$, we fix a choice of $\sigma_v$ and $\tau_v$ and keep it all throughout the text.

\begin{lem}\label{lemma local H1s}
Let $G$ be a finite $K$-group of order $n$ and let $v\in\Omega_K$ be a place outside $\bad_G$. Denote by $T_v^n$ the subgroup of $\Gamma_v$ generated by $\tau_v^n$. Then there is an isomorphism $\H^1(K_v,G)\cong \H^1(\Gamma_v/T_v^n,G)$ given by inflation.
\end{lem}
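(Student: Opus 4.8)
The goal is to show that, for $v\notin\bad_G$, inflation from $\Gamma_v/T_v^n$ gives an isomorphism on $\H^1$ with coefficients in $G$. I would attack this via the inflation-restriction machinery for non-abelian $\H^1$, applied to a suitable tower of profinite groups sitting between $\gal(K_v)$ and $\Gamma_v/T_v^n$.

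First I would reduce to the tamely ramified quotient. Since $v$ does not divide $n=|G|$ (as $v\notin\bad_G$), the wild inertia group $W_v=\gal(K_v^\tr)$, being a pro-$v$ group, acts on $G$ through a finite quotient of order prime to $v$; but a pro-$v$ group has no nontrivial finite quotient of order prime to $v$, so $W_v$ acts trivially on $G$. The non-abelian inflation-restriction sequence (\cite[I.5.8]{SerreCohGal}) for $1\to W_v\to\gal(K_v)\to\Gamma_v\to 1$ reads
\[1\to \H^1(\Gamma_v,G)\xrightarrow{\inf}\H^1(K_v,G)\xrightarrow{\res}\H^1(W_v,G)^{\Gamma_v},\]
and $\H^1(W_v,G)=\hom_{\mathrm{cont}}(W_v,G)/\!\sim$ is trivial because a continuous homomorphism from a pro-$v$ group to a finite group of order prime to $v$ is trivial. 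Hence $\inf:\H^1(\Gamma_v,G)\to\H^1(K_v,G)$ is a bijection. (For archimedean $v$ this is vacuous since $W_v=1$ and $T_v^n=1$.)

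Next I would pass from $\Gamma_v$ to $\Gamma_v/T_v^n$ by the same device. Here $T_v^n=\langle\tau_v^n\rangle$ is a closed normal subgroup of $\Gamma_v$ (it is characteristic in the procyclic normal subgroup $T_v$, since $T_v\cong\hat\z^{(v')}$ and multiplication by $n$ on it), with quotient $\Gamma_v/T_v^n$. Again I want $\Gamma_v/T_v^n$ acting trivially on... no: the subtlety is that $L/K$ splitting $G$ is unramified at $v$, so $\gal(K_v)$ acts on $G$ through the unramified quotient $\gal(K_v^{\un}/K_v)$, i.e. $T_v$ already acts trivially on $G$; in particular $T_v^n$ acts trivially. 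The inflation-restriction sequence for $1\to T_v^n\to\Gamma_v\to\Gamma_v/T_v^n\to 1$ gives
\[1\to\H^1(\Gamma_v/T_v^n,G)\xrightarrow{\inf}\H^1(\Gamma_v,G)\xrightarrow{\res}\H^1(T_v^n,G)^{\Gamma_v/T_v^n},\]
so it remains to show the restriction of any class to $\H^1(T_v^n,G)$ is trivial. Since $T_v^n$ acts trivially on $G$, $\H^1(T_v^n,G)=\hom_{\mathrm{cont}}(T_v^n,G)/\!\sim$; because $T_v^n\cong\hat\z^{(v')}$ is procyclic, such a homomorphism is determined by the image of a topological generator, and a cocycle $a\in Z^1(\Gamma_v,G)$ restricted to $T_v^n$ sends $\tau_v^n$ to $a_{\tau_v}a_{\tau_v}^{\sigma\text{-twists}}\cdots$ — more precisely, using $a_{\tau_v^n}=a_{\tau_v}\cdot{}^{\tau_v}a_{\tau_v}\cdots{}^{\tau_v^{n-1}}a_{\tau_v}=a_{\tau_v}^n$ (as $\tau_v$ acts trivially on $G$), and the cocycle relation shows $a_{\tau_v}$ has order dividing $n$ once one accounts for conjugation by $\sigma_v$; so $a_{\tau_v^n}=a_{\tau_v}^n=1$...

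The main obstacle, and the step I'd be most careful about, is precisely this last point: showing $a|_{T_v^n}$ is null-homotopic, equivalently that $a_{\tau_v}^n$ is a coboundary over $T_v^n$ — one must use that $G$ has exponent dividing $n$ together with the conjugation action of $\sigma_v$ on $\tau_v$ (namely $\sigma_v\tau_v\sigma_v^{-1}=\tau_v^{q_v}$) to pin down which elements of $G$ can be $a_{\tau_v}$, and to check the relevant class in $\H^1(T_v^n,G)^{\Gamma_v/T_v^n}$ vanishes rather than just lands in a controlled set. An alternative, perhaps cleaner route that sidesteps computing the full sequence: directly exhibit, given any $b\in Z^1(\Gamma_v,G)$, a cohomologous cocycle $b'$ with $b'_{\tau_v^n}=1$, hence factoring through $\Gamma_v/T_v^n$; this uses that $b_{\tau_v}$ has finite order dividing $n$, that $\langle b_{\tau_v}\rangle$ is a cyclic subgroup of $G$, and a conjugation argument to kill $b_{\tau_v^n}=b_{\tau_v}^n=1$ automatically — in fact $b_{\tau_v^n}=1$ comes for free, and one still must handle surjectivity of the inflation map onto $\H^1(\Gamma_v,G)$ (the hard direction), for which the non-abelian inflation-restriction exactness above is the right tool. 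I would write the argument in the inflation-restriction form, reducing everything to the two vanishing statements $\H^1(W_v,G)=1$ and $\H^1(T_v^n,G)^{\Gamma_v/T_v^n}=1$, with the latter being where the hypotheses $v\nmid n$ and $L/K$ unramified at $v$ are both genuinely used.
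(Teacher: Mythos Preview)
Your approach is essentially the same as the paper's: two applications of the non-abelian inflation--restriction sequence, first for $W_v\lhd\gal(K_v)$ and then for $T_v^n\lhd\Gamma_v$. Both you and the paper dispose of the first step identically.

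Where you diverge is in tone rather than content on the second step. You correctly compute $a_{\tau_v^n}=a_{\tau_v}^n$ (since $T_v$ acts trivially on $G$), and since $a_{\tau_v}\in G$ with $|G|=n$, this is $1$. That is the \emph{entire} argument: the restriction of any cocycle to $T_v^n$ is literally the trivial homomorphism, not merely cohomologous to it, so $\res:\H^1(\Gamma_v,G)\to\H^1(T_v^n,G)$ is zero and inflation from $\Gamma_v/T_v^n$ is bijective. There is no ``main obstacle'' here, and no need to invoke the $\sigma_v$-conjugation relation $\sigma_v\tau_v\sigma_v^{-1}=\tau_v^{q_v}$ or worry about $\H^1(T_v^n,G)^{\Gamma_v/T_v^n}$ beyond observing the restriction lands at the basepoint. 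Your remark that ``$a_{\tau_v}$ has order dividing $n$ once one accounts for conjugation by $\sigma_v$'' is misleading: the order divides $n$ simply because $a_{\tau_v}$ lies in a group of order $n$. The paper phrases this step more cleanly by saying the restriction to $T_v$ is a morphism $T_v\to G$, which is evidently trivial on $T_v^n$.

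In short: your proof is correct and matches the paper's, but you have talked yourself into seeing a difficulty that is not there.
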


\begin{proof}
The statement is trivial for archimedean $v$, so we may assume that $v$ is finite. Since the wild ramification subgroup $W_v\leq \Gal(K_v)$ acts trivially on $G$,  the inflation-restriction sequence gives:
\[1\to \H^1(\Gamma_v,G)\xrightarrow{\inf} \H^1(K_v,G)\xrightarrow{\res} \H^1(W_v,G),\]
and $\H^1(W_v,G)$ is the set of morphisms $W_v\to G$ up to conjugation. Since $W_v$ is a pro-$p$ group with $p$ not dividing $n$, there are no such nontrivial morphisms and hence $\H^1(K_v,G)\cong \H^1(\Gamma_v,G)$.

Consider now a class $\alpha\in \H^1(\Gamma_v,G)$. Since $v$ is unramified in the minimal extension splitting $G$, $T_v$ also acts trivially on $G$ and hence $\alpha$ restricts to a morphism (up to conjugation) from $T_v$ to $G$. It is then evident that this morphism is trivial over $T_v^n$. The same inflation-restriction argument then gives $$\H^1(K_v,G)\cong \H^1(\Gamma_v/T_v^n,G).$$
\end{proof}

The following reocurrence result generalizes the statement given in the beginning of the section.

\begin{pro}\label{prop reocurrence}
Let $G$ be a finite $K$-group of order $n$, $L/K$ a Galois extension splitting~$G$ and let $v\in\Omega_K$ be either an archimedean place or a finite place which is unramified in $L$ and does not divide $n$ (in particular, $v\not\in \bad_G$). Then there exist infinitely many finite places $w\not\in \bad_G$ for which:
\begin{enumerate}
\item  the decomposition groups of $v$ and $w$ in $\gal(L/K)$ are conjugate;
\item  there is an epimorphism $\phi:\Gamma_w/T_w^n\twoheadrightarrow \Gamma_{v}/T_{v}^n$ given by  $\phi(\sigma_w)=\sigma_v, \phi(\tau_w)=~\tau_v$, which is moreover an isomorphism if $v$ is finite.
\end{enumerate}
The epimorphism $\phi$ induces a monomorphism $\phi^*:\H^1(K_{v},G)\hookrightarrow \H^1(K_{w},G),$
which is moreover an isomorphism if $v$ is finite.
\end{pro}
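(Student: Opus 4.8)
The plan is to build the places $w$ using Chebotarev's density theorem applied to a suitable finite extension of $K$, and then to transport cohomology via the epimorphism $\phi$ using the inflation isomorphism of Lemma~\ref{lemma local H1s}. First I would set up the group-theoretic target: since $v$ is unramified in $L$ and prime to $n$, Lemma~\ref{lemma local H1s} gives $\H^1(K_v,G)\cong\H^1(\Gamma_v/T_v^n,G)$, and the finite quotient $\Gamma_v/T_v^n$ is the semidirect product $\langle\bar\tau_v\rangle\rtimes\langle\bar\sigma_v\rangle$ with $\bar\tau_v^n=1$ and $\bar\sigma_v\bar\tau_v\bar\sigma_v^{-1}=\bar\tau_v^{q_v}$ (truncating the procyclic inertia at level $n$; for archimedean $v$ one has $\tau_v=1$ and $\Gamma_v/T_v^n$ is just $\gal(K_v)$ of order $1$ or $2$). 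What makes the recurrence work is that this finite group, together with the conjugation action of $\sigma_v$ on $G$ (which factors through $\gal(L/K)$), is encoded by finitely much data: the Frobenius class of $v$ in $\gal(L/K)$, and the residue of $q_v$ modulo $n$ (equivalently, the image of $\sigma_v$ in $\gal(K(\mu_n)/K)$).

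Next I would produce $w$. Let $M$ be the compositum $L\cdot K(\mu_n)$, a finite Galois extension of $K$. The element $\sigma_v$ determines a conjugacy class in $\gal(M/K)$ — more precisely, a well-defined Frobenius-type class once we fix compatible choices — whose image in $\gal(L/K)$ is the decomposition class of $v$ and whose image in $\gal(K(\mu_n)/K)$ records $q_v\bmod n$. By Chebotarev there are infinitely many finite places $w$ of $K$, unramified in $M$, whose Frobenius in $\gal(M/K)$ lies in this class. For such $w$: it is unramified in $L$ and prime to $n$ (since it is unramified in $K(\mu_n)$), so $w\notin\bad_G$ and Lemma~\ref{lemma local H1s} applies; its decomposition group in $\gal(L/K)$ is conjugate to that of $v$, giving (1); and $q_w\equiv q_v\pmod n$, so the map $\sigma_w\mapsto\sigma_v$, $\tau_w\mapsto\tau_v$ respects the relation $\sigma\tau\sigma^{-1}=\tau^{q}$ modulo $\tau^n$, hence extends to a well-defined epimorphism $\phi:\Gamma_w/T_w^n\twoheadrightarrow\Gamma_v/T_v^n$, which is an isomorphism when $v$ is finite since both sides then have order $n\cdot\mathrm{ord}(q_v\bmod n)$ — wait, rather: both are extensions of $\hat{\mathbb Z}$-type quotients, but after truncating inertia at level $n$ and noting the Frobenius acts through the same finite quotient, a cardinality count (or the fact that $\phi$ is surjective between groups that are visibly isomorphic by construction) gives injectivity. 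For archimedean $v$, $\Gamma_w/T_w^n$ is genuinely larger and $\phi$ is only surjective.

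Finally, to get the map on cohomology: $\phi$ is compatible with the $G$-action on both sides, because $G$ is acted on through $\gal(L/K)$ and both $\sigma_v,\sigma_w$ reduce to conjugate elements there (here one fixes a $\gal(M/K)$-lift so the actions literally agree, not just up to conjugacy — this is harmless since $\H^1$ is taken up to conjugation anyway). Inflation along $\phi$ then gives $\phi^*:\H^1(\Gamma_v/T_v^n,G)\to\H^1(\Gamma_w/T_w^n,G)$, and composing with the Lemma~\ref{lemma local H1s} isomorphisms on both ends yields $\phi^*:\H^1(K_v,G)\to\H^1(K_w,G)$. Inflation along a surjection of profinite (here finite) groups is always injective, giving the monomorphism; when $\phi$ is an isomorphism, $\phi^*$ is an isomorphism. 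The main obstacle I anticipate is purely bookkeeping: matching up the chosen topological generators $\sigma_v,\tau_v,\sigma_w,\tau_w$ and the $\gal(L/K)$-action on $G$ coherently across the two places, so that $\phi$ is genuinely a morphism of $G$-groups and not merely of abstract groups — this requires care in choosing the Frobenius lift in $\gal(M/K)$ compatibly with the fixed embeddings $\gal(K_v),\gal(K_w)\hookrightarrow\gal(K)$, but it involves no hard mathematics beyond the standard functoriality of inflation.
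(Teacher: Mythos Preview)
Your proposal is correct and follows essentially the same route as the paper: apply Chebotarev to $L(\mu_n)/K$ to find $w$ with Frobenius conjugate to that of $v$, read off $q_w\equiv q_v\pmod n$ and conjugate decomposition groups, build $\phi$ from the presentation of $\Gamma_v/T_v^n$, and transport $\H^1$ via Lemma~\ref{lemma local H1s}. Two small remarks: first, $\Gamma_v/T_v^n$ is profinite (an extension of $\hat{\mathbb Z}$ by a finite cyclic group), not finite, so your ``cardinality count'' aside is off---but your parenthetical ``visibly isomorphic by construction'' is exactly right, since both sides satisfy the same profinite presentation $\langle\sigma,\tau\mid\sigma\tau\sigma^{-1}=\tau^{q},\ \tau^n=1\rangle$; second, the paper handles the $G$-action compatibility you flag by introducing an explicit conjugating element $\rho\in\gal(K)$ and factoring $\phi^*$ as $\rho_*$ (adjusting the action on $G$) followed by inflation along $\phi$, which is precisely the bookkeeping you anticipate.
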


\begin{proof}
By Chebotarev's density theorem applied to the Galois extension $L(\mu_{n})/K$ (which is unramified in $v$) there are infinitely many places $w\not\in\bad_G$ for which the image of the decomposition group in $\gal(L(\mu_n)/K)$ is generated by the image of~$\sigma_w$ and is conjugate to the image of $\sigma_v$. In particular, we get that these groups are conjugate in $\gal(L/K)$.

If $v$ is finite, the images of $\sigma_v$ and $\sigma_{w}$ coincide in the quotient $\gal(K(\mu_n)/K)$ and hence their residue degrees $q_v,q_w$, respectively, are congruent mod $n$. Moreover, recall that the group $\Gamma_v/T_v^n$ has the following presentation:
\[ \langle \sigma_v,\tau_v\,|\,\sigma_v\tau_v\sigma_v^{-1}=\tau_v^{q_v},\tau_v^n=1\rangle .\]
Since $q_w\equiv q_{v}\mod n$, we get an isomorphism $\phi:\Gamma_w/T_w^n\to \Gamma_{v}/T_{v}^n$ by setting $\phi(\sigma_w)=\sigma_{v}$ and $\phi(\tau_w)=\tau_{v}$. If $v$ is archimedean, the same definition gives an epimorphism $\phi:\Gamma_w/T_w^n\to \gal(K_v)$ since $\sigma_v^2=\tau_v=1$.\\

Let $\rho\in\gal(K)$ be an element for which $\sigma_{w}$ coincides with $\rho\sigma_{v}\rho^{-1}$ in $\gal(L/K)$. Denote by $\asd{}{\rho}{}{}{G}$ the $K_v$-group where $\sigma\in\gal(K_v)$ acts on $G$ by $g\mapsto\asd{\rho\sigma\rho^{-1}}{}{}{}{g}$ and note that this action coincides with that of $\gal(K_w)$ on $G$. Consider the  maps
\[\H^1(\Gamma_{v}/T_{v}^n,G)\xrightarrow{\rho_*} \H^1(\Gamma_v/T_v^n,\asd{}{\rho}{}{}{G})\xrightarrow{\phi^*} \H^1(\Gamma_w/T_w^n,G),\]
where $\rho_*$ is a canonical isomorphism defined at the level of cocycles by the identity on $\Gamma_v/T_v^n$ and by sending $g\in G$ to $\asd{\rho}{}{}{}{g}$. We abusively denote by $\phi^*$ the whole composition. Then evidently $\phi^*$ is an isomorphism if $v$ is finite and injective as an inflation morphism if $v$ is archimedean. Recall finally that by Lemma \ref{lemma local H1s}, we have  $\H^1(K_v,G)\cong \H^1(\Gamma_v/T_v^n,G)$ for finite $v$ and $\Gamma_v/T_v^n=\gal(K_v)$ for archimedean $v$, hence the result.
\end{proof}

We finally give a particular application necessary for the proof of Theorem \ref{Main Thm Intro}:

\begin{lem}\label{reocurrence lemma}
Let $G$, $L/K$, $v$, $w$, $\phi$ and $\phi^*$ be as given by Proposition \ref{prop reocurrence}. Let $\alpha\in \H^1(K_{v},G)$ and assume that there exists a class $\beta=[b]\in \H^1(K,G)$ such that $\beta_{v}=\alpha$, $\beta_{w}=\phi^*\alpha$ and such that the group morphism $\gal(L)\to G$ obtained by restriction of $b$ to $\gal(L)$ is surjective. Denote by $L'$ the extension defined by the kernel of this morphism. Then the decomposition groups of $v$ and $w$ in $\gal(L'/K)$ are conjugate.
\end{lem}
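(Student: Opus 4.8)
The idea is to realise $\gal(L'/K)$ explicitly as a semidirect product, inside which ``decomposition group'' becomes ``image of the local Galois group'', and then to compare these images for $v$ and $w$. Write $\bar\sigma\in\gal(L/K)$ for the image of $\sigma\in\gal(K)$, and let $\gal(L/K)$ act on $G$ through the $\gal(K)$-action (legitimate since $L$ splits $G$). I would first introduce the map
\[
\Psi\colon\gal(K)\longrightarrow G\rtimes\gal(L/K),\qquad \sigma\longmapsto(b_\sigma,\bar\sigma).
\]
The cocycle relation for $b$ says exactly that $\Psi$ is a homomorphism; its kernel is $\{\sigma\in\gal(L):b_\sigma=1\}$, which is the subgroup defining $L'$, and — using that $b|_{\gal(L)}$ is onto and $\gal(L)$ acts trivially on $G$ — $\Psi$ is moreover \emph{surjective}. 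Hence $L'/K$ is Galois, and $\Psi$ induces an isomorphism $\bar\Psi\colon\gal(L'/K)\xrightarrow{\ \sim\ }G\rtimes\gal(L/K)$ carrying $\gal(L'/L)$ onto $G\times\{1\}$ and, for every place $u$ of $K$, carrying the decomposition group of $u$ onto $\Psi(\gal(K_u))$ (for our fixed embedding $\gal(K_u)\hookrightarrow\gal(K)$). Thus it suffices to prove that $\Psi(\gal(K_v))$ and $\Psi(\gal(K_w))$ are conjugate in $G\rtimes\gal(L/K)$.

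Next I would descend to the tame quotients. Since $v,w\notin\bad_G$, Lemma~\ref{lemma local H1s} provides cocycles $a_v\in Z^1(\Gamma_v/T_v^n,G)$ and $a_w\in Z^1(\Gamma_w/T_w^n,G)$ whose inflations to $\gal(K_v)$, $\gal(K_w)$ represent $\beta_v=\alpha$ and $\beta_w=\phi^*\alpha$; moreover, by the very definition of $\phi^*$ in the proof of Proposition~\ref{prop reocurrence} (pullback along $\phi$ after the twist by $\rho$) one may take $(a_w)_\gamma={}^{\rho}(a_v)_{\phi(\gamma)}$ for $\gamma\in\Gamma_w/T_w^n$, where $\rho\in\gal(K)$ is the element from that proof, so $\sigma_w=\rho\sigma_v\rho^{-1}$ in $\gal(L/K)$; put $c:=\bar\rho\in\gal(L/K)$, so ${}^{\rho}(\cdot)={}^{c}(\cdot)$ on $G$. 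Since $v$ is unramified in $L$, the reduction $\gal(K_v)\to\gal(L/K)$ factors through a homomorphism $\lambda_v\colon\Gamma_v/T_v^n\to\gal(L/K)$ with $\lambda_v(\tau_v)=1$, and similarly for $w$. As $\res_v b$ and $\inf a_v$ are cohomologous cocycles on $\gal(K_v)$, they are conjugate by some $g\in G$; combining this with the factorisation just mentioned, $\Psi(\gal(K_v))$ is conjugate — by the element $(g,1)\in G\times\{1\}$ — to the image of the homomorphism $\hat\Psi_v\colon\Gamma_v/T_v^n\to G\rtimes\gal(L/K)$, $\gamma\mapsto\big((a_v)_\gamma,\lambda_v(\gamma)\big)$, and likewise $\Psi(\gal(K_w))$ is $G$-conjugate to the image of $\hat\Psi_w\colon\gamma\mapsto\big((a_w)_\gamma,\lambda_w(\gamma)\big)$.

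Finally I would relate $\hat\Psi_v$ and $\hat\Psi_w$. From $\phi(\sigma_w)=\sigma_v$, $\phi(\tau_w)=\tau_v$, the identity $\bar\sigma_w=c\,\bar\sigma_v\,c^{-1}$ in $\gal(L/K)$ (the choice of $\rho$), and $\lambda_v(\tau_v)=\lambda_w(\tau_w)=1$, one checks on generators that $\lambda_w(\gamma)=c\,\lambda_v(\phi(\gamma))\,c^{-1}$ for all $\gamma\in\Gamma_w/T_w^n$. Together with $(a_w)_\gamma={}^{c}(a_v)_{\phi(\gamma)}$ and the relation $(1,c)(x,y)(1,c)^{-1}=\big({}^{c}x,\,c\,y\,c^{-1}\big)$ in $G\rtimes\gal(L/K)$, this gives $\hat\Psi_w(\gamma)=(1,c)\,\hat\Psi_v(\phi(\gamma))\,(1,c)^{-1}$, whence, $\phi$ being onto $\Gamma_v/T_v^n$,
\[
\hat\Psi_w\big(\Gamma_w/T_w^n\big)=(1,c)\,\hat\Psi_v\big(\Gamma_v/T_v^n\big)\,(1,c)^{-1}.
\]
Chaining this with the two $G$-conjugations of the previous paragraph shows that $\Psi(\gal(K_v))$ and $\Psi(\gal(K_w))$ are conjugate in $G\rtimes\gal(L/K)\cong\gal(L'/K)$, i.e.\ the decomposition groups of $v$ and $w$ in $\gal(L'/K)$ are conjugate. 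For archimedean $v$ the argument is literally the same, $\phi$ then being only surjective — immaterial above, since only the images $\hat\Psi_v(\Gamma_v/T_v^n)$, $\hat\Psi_w(\Gamma_w/T_w^n)$ enter.

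\noindent\textbf{Main obstacle.} Once the set-up is in place the manipulations are formal; the points needing care are (i) making $\phi^*$ explicit on a \emph{chosen} tame cocycle — pinning down $(a_w)_\gamma={}^{\rho}(a_v)_{\phi(\gamma)}$ together with the matching conjugation $\lambda_w=c\,(\lambda_v\circ\phi)\,c^{-1}$ of the reductions to $\gal(L/K)$ — and (ii) reconciling the a priori global cocycle $b$ with its tame local approximations $a_v,a_w$ via Lemma~\ref{lemma local H1s}. The surjectivity hypothesis on $b|_{\gal(L)}$ is exactly what makes $\Psi$ onto, so that $\bar\Psi$ is an isomorphism rather than merely an embedding and conjugacy in the ambient group $G\rtimes\gal(L/K)$ coincides with conjugacy in $\gal(L'/K)$.
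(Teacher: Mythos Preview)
Your argument is correct and takes a genuinely different route from the paper's. The paper never introduces the semidirect product $G\rtimes\gal(L/K)$; instead it works directly with the cocycle $b$. Its key manoeuvre is to use the surjectivity of $b|_{\gal(L)}$ to replace the element $\rho$ from Proposition~\ref{prop reocurrence} by some $\chi\rho$ with $\chi\in\gal(L)$ and $b_{\chi\rho}=1$, and then to carry out explicit cocycle identities (your equations are essentially their equation labelled~\eqref{equ:test} in disguise) to exhibit a concrete conjugator $\rho_0=\chi'^{-1}\rho\chi\in\gal(L'/K)$ sending $\sigma_v,\tau_v$ to $\sigma_w,\tau_w$. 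By contrast, you package the cocycle $b$ into the homomorphism $\Psi:\sigma\mapsto(b_\sigma,\bar\sigma)$, so that ``cohomologous'' becomes ``$G$-conjugate'' and the comparison between the two local pictures becomes a single conjugation by $(1,c)$; surjectivity of $b|_{\gal(L)}$ is used to make $\Psi$ onto, so that conjugacy in the ambient group is conjugacy in $\gal(L'/K)$. Your approach is more structural and arguably cleaner --- the cocycle bookkeeping is absorbed into the group law of $G\rtimes\gal(L/K)$ --- while the paper's computation has the virtue of producing the conjugating element explicitly inside $\gal(L'/K)$ without an auxiliary identification. The two uses of surjectivity (yours: $\Psi$ onto; theirs: adjust $\rho$ so $b_\rho=1$) are equivalent reformulations of the same leverage.
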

We  first note that $L'/K$ is indeed Galois:
\begin{lem}\label{lemma Galois extension}
Let $G$ be a $K$-group and $L/K$ be a Galois extension splitting $G$. Let $b\in Z^1(K,G)$ and $L'/L$ be the field fixed by the kernel of the restriction of $b$ to $\gal(L)$. Then $L'$ is Galois over $K$.
\end{lem}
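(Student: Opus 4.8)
The plan is to identify $L'$ with the fixed field of the subgroup $N:=\ker\bigl(b|_{\gal(L)}\bigr)\subseteq\gal(L)$ and to prove that $N$ is normal not merely in $\gal(L)$ but in all of $\gal(K)$; since over a field of characteristic $0$ an extension is Galois exactly when it is normal, and $L'/K$ is normal precisely when $\gal(\bar K/L')=N$ is normal in $\gal(K)$, this is all that is required. A preliminary observation makes the set-up legitimate: because $L$ splits $G$, the group $\gal(L)$ acts trivially on $G$, so for $\sigma,\tau\in\gal(L)$ the cocycle identity $b_{\sigma\tau}=b_\sigma\cdot{}^{\sigma}b_\tau$ collapses to $b_{\sigma\tau}=b_\sigma b_\tau$; hence $b|_{\gal(L)}$ is a genuine continuous homomorphism $\gal(L)\to G$ into a finite discrete group, $N$ is a closed subgroup, and $L'=\bar K^{N}$ makes sense (and contains $L$).

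Next I would fix $\rho\in\gal(K)$ and $n\in N$ and verify that $\rho n\rho^{-1}\in N$. That $\rho n\rho^{-1}$ lies in $\gal(L)$ is immediate from $\gal(L)\trianglelefteq\gal(K)$, so it only remains to show $b_{\rho n\rho^{-1}}=1$. This is a three-line manipulation of the cocycle relation. Writing $\rho n\rho^{-1}=\rho\cdot(n\rho^{-1})$ gives $b_{\rho n\rho^{-1}}=b_\rho\cdot{}^{\rho}b_{n\rho^{-1}}$; since $b_n=1$ and $n\in\gal(L)$ acts trivially on $G$, one has $b_{n\rho^{-1}}=b_n\cdot{}^{n}b_{\rho^{-1}}=b_{\rho^{-1}}$, whence $b_{\rho n\rho^{-1}}=b_\rho\cdot{}^{\rho}b_{\rho^{-1}}$. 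On the other hand, the cocycle relation applied to $1=\rho\rho^{-1}$ together with $b_1=1$ yields $b_\rho\cdot{}^{\rho}b_{\rho^{-1}}=1$. Combining the two equalities gives $b_{\rho n\rho^{-1}}=1$, so $\rho n\rho^{-1}\in N$; thus $N$ is normal in $\gal(K)$, and therefore $L'=\bar K^{N}$ is Galois over $K$.

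I do not expect a genuine obstacle: the argument is entirely formal once one notices that the hypothesis ``$L$ splits $G$'' turns $b|_{\gal(L)}$ into a homomorphism. The only places that need mild care are the two uses of the triviality of the $\gal(L)$-action on $G$ — first to make $b|_{\gal(L)}$ multiplicative, then to replace the $(n\rho^{-1})$-action by the $\rho^{-1}$-action in the displayed computation — and the routine bookkeeping of the twisted product in the cocycle identity.
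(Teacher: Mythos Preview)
Your proof is correct and follows essentially the same approach as the paper's: both show directly, via the cocycle identity and the fact that $\gal(L)$ acts trivially on $G$, that $b_{\sigma\tau\sigma^{-1}}=1$ for $\sigma\in\gal(K)$ and $\tau\in\ker(b|_{\gal(L)})$. The only cosmetic difference is the grouping of the product --- the paper expands $(\sigma\tau)\sigma^{-1}$ and uses that $\sigma\tau\sigma^{-1}\in\gal(L)$ acts trivially on $b_\sigma$, whereas you expand $\rho(n\rho^{-1})$ and use that $n\in\gal(L)$ acts trivially on $b_{\rho^{-1}}$.
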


\begin{proof}
Let $\tau\in\gal(L')$ and $\sigma\in\gal(K)$. Then
\[b_{\sigma\tau\sigma^{-1}}=b_\sigma\asd{\sigma}{}{}{\tau}{b}\asd{\sigma\tau\sigma^{-1}}{}{-1}{\sigma}{b}.\]
Now, since $L/K$ is Galois and $\tau\in\gal(L')\subseteq \gal(L)$, we get $\sigma\tau\sigma^{-1}\in\gal(L)$, hence it acts trivially on $b_\sigma$. As $b_\tau=1$ by definition of $L'$, we get  $b_{\sigma\tau\sigma^{-1}}=1$, which proves that $\sigma\tau\sigma^{-1}\in\gal(L')$ and thus $L'/K$ is Galois.
\end{proof}

\begin{proof}[Proof of Lemma \ref{reocurrence lemma}]
We first show that the decomposition groups of $v$ and $w$ in $\gal(L'/K)$ are quotients of $\Gamma_{v}/T_{v}^n$ and $\Gamma_w/T_w^n$. Indeed, this holds for archimedean $v$ since $\Gal(K_v)=\Gamma_v/T_v^n$. For finite $v$ and $w$, $L'/K$ is tamely ramified since $L/K$ is unramified and $L'/L$ is of degree $n$ and hence prime to $v$ and $w$. Since the restriction of $b$ to $T_v$ and $T_{w}$ is a morphism, it must be trivial on $T_v^n$ and $T_{w}^n$, i.e. $T_v^n, T_{w}^n\subset\gal(L')$, proving the claim.

Denote then (abusively) by $\sigma_{v},\sigma_w,\tau_{v},\tau_w,$ the images  in $\gal(L'/K)$ of these elements. Recall that by the proof of Proposition \ref{prop reocurrence}, there exists an element $\rho\in\gal(K)$ such that $\sigma_{w}$ coincides with $\rho\sigma_{v}\rho^{-1}$ in $\gal(L/K)$. Also note that  $\rho\tau_v\rho^{-1}=~\tau_w=~1$ in $\gal(L/K)$. We claim that we may choose $\rho$ so that $b_\rho=1$. Note that $b_{\chi\rho}=b_\chi b_\rho$ for $\chi\in\gal(L)$. Since $b$ is surjective when restricted to $\gal(L)$, there exists $\chi\in\gal(L)$ such that $b_\chi=b_\rho^{-1}$ so that $b_{\chi\rho}=1$. Since $\chi\in\gal(L)$, we have that $\sigma_{w}$ coincides with $(\chi\rho)\sigma_{v}(\chi\rho)^{-1}$ in $\gal(L/K)$, proving the claim.

The isomorphism $\phi^*$ between $\H^1(K_v,G)$ and $\H^1(K_{w},G)$ is given at the level of cocycles by the morphisms $\rho_*:G\to G$ and $\phi:\Gamma_w/T_w^n\to \Gamma_{v}/T_{v}^n$, which satisfy
$\rho_*(g)=\asd{\rho}{}{}{}{g}$, $\phi(\sigma_w)=\sigma_v$, and $\phi(\tau_w) = \tau_v$. In particular, if $\alpha=[a]$ for $a\in Z^1(\Gamma_{v}/T_{v}^n,G)$, then $\phi^*\alpha=[a']$ with $a'_{\sigma_{w}}=\asd{\rho}{}{}{\sigma_v}{a}$ and $a'_{\tau_w}=\asd{\rho}{}{}{\tau_{v}}{a}$.

Now, since $\beta_{v}=\alpha$ and $\beta_{w}=\phi^*\alpha$, we know that there exist $g,g'\in G$ such that
\begin{align*}
\asd{\rho}{}{}{}{(}gb_{\sigma_{v}}\asd{\sigma_v}{}{-1}{}{g})=\asd{\rho}{}{}{\sigma_{v}}{a}&=a'_{\sigma_{w}}=g' b_{\sigma_{w}}\asd{\sigma_w}{}{-1}{}{g'},\\
\asd{\rho}{}{}{}{(}gb_{\tau_{v}}g^{-1})=\asd{\rho}{}{}{\tau_{v}}{a}&=a'_{\tau_{w}}=g' b_{\tau_{w}}{g'}^{-1}.
\end{align*}
Since $b$ restricted to $\gal(L'/L)$ is an isomorphism by hypothesis, there are unique $\chi,\chi'\in\gal(L'/L)$ such that $b_\chi=g$, $b_{\chi'}=g'$, so that
\begin{align}\label{equ:test}
\begin{array}{rl}
\asd{\rho}{}{}{\chi}{b} \asd{\rho}{}{}{\sigma_{v}}{b}\asd{\rho\sigma_v}{}{-1}{\chi}{b} &=b_{\chi'} b_{\sigma_{w}}\asd{\sigma_w}{}{-1}{\chi'}{b},\\
\asd{\rho}{}{}{\chi}{b} \asd{\rho}{}{}{\tau_{v}}{b} \asd{\rho}{}{-1}{\chi}{b} &=b_{\chi'} b_{\tau_{w}} {b}_{\chi'}^{-1}.
\end{array}
\end{align}
Note now that since $b_\rho=1$, we have $b_{\rho\eta\rho^{-1}}=\asd{\rho}{}{}{\eta}{b}$ for every $\eta\in\gal(L'/L)$, so in particular for $\eta=\chi,\tau_v$, and $\chi^{-1}$. Noting that $\rho\Gal(L'/L)\rho^{-1}\subseteq \Gal(L'/L)$ since $L/K$ is Galois, that the restriction of $b$ to $\Gal(L'/L)$ is a homomorphism, and putting $\rho_0:={\chi'}^{-1}\rho\chi\in\gal(L'/K)$,  \eqref{equ:test} gives $b_{\rho_0\tau_v\rho_0^{-1}}=b_{\tau_w}$ and hence $\rho_0\tau_v\rho_0^{-1}=\tau_w$ in $\gal(L'/L)\subset\gal(L'/K)$.

Finally, we claim that $\rho_0\sigma_v\rho_0^{-1}=\sigma_w$ in $\gal(L'/K)$.
Since $\chi,\chi'\in\gal(L)$, we already know that $\sigma_w$ and $\rho_0\sigma_v\rho_0^{-1}$ coincide on $\gal(L/K)$ and hence act equally on $G$. Thus, on the one hand, \eqref{equ:test} gives
\[b_{\rho_0}\asd{\rho}{}{}{\sigma_v}{b}\asd{\sigma_w}{}{-1}{\rho_0}{b}=b_{\sigma_w}.\]
On the other hand, we have  $\rho_0\sigma_v\rho_0^{-1}=\psi\sigma_w$ for some $\psi\in\gal(L'/L)$. Applying $b$ to this equality another direct computation gives
\[b_{\rho_0}\asd{\rho}{}{}{\sigma_v}{b}\asd{\sigma_w}{}{-1}{\rho_0}{b}=b_\psi b_{\sigma_w}.\]
From the two last equalities we get that $b_\psi=1$.  Since $b$ is an isomorphism over $\gal(L'/L)$,  we get $\psi=1$, proving the claim. Thus conjugation by $\rho_0$ sends the decomposition group of $v$ in $L'/K$ to that of $w$.
\end{proof}

\section{Weak approximation away from $\bad_G$}\label{section Main Thm}
We now restate and prove Theorem \ref{Main Thm Intro} in the language of approximation properties. Recall that, given a $K$-group $G$, $\bad_G$ denotes the set of bad places associated to it, cf.~Definition \ref{definition bad places}.

\begin{thm}\label{Main Thm}
Let $K$ be a number field and $G$ be a finite $K$-group with weak approximation away from $\bad_G$. Then every semidirect product $E= A\rtimes G$ with $A$ an abelian $K$-group has weak approximation away from $\bad_E$.
\end{thm}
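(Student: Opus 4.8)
The plan is to verify the criterion of Proposition~\ref{prop WA and H1} for $E$: fixing a finite set $S\subset\Omega_K$ disjoint from $\bad_E$ and a family $(\epsilon_v)_{v\in S}\in\prod_{v\in S}\H^1(K_v,E)$, I must produce a global class of $\H^1(K,E)$ restricting to it. Let $L$ be the minimal extension splitting $E$; since the $\gal(K)$-action respects $E=A\rtimes G$, the field $L$ also splits $A$ and $G$ and $\bad_A\cup\bad_G\subseteq\bad_E$, so $S$ avoids both $\bad_G$ and the places dividing $|A|$. Push $(\epsilon_v)$ forward along $E\twoheadrightarrow G$ to obtain $(g_v)_{v\in S}$. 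For each $v\in S$, apply Proposition~\ref{prop reocurrence} --- with the group $G$, the extension $L$, and the integer $n=|G|$ --- to choose a finite place $w(v)\notin\bad_G$, the $w(v)$ pairwise distinct and not in $S$, such that the decomposition group of $w(v)$ in $\gal(L/K)$ is conjugate to that of $v$ and the induced map $\phi^*\colon\H^1(K_v,G)\to\H^1(K_{w(v)},G)$ is an isomorphism (or merely injective when $v$ is archimedean). Put $S'=\{w(v):v\in S\}$ and choose a finite set $S''$ of places split completely in $L$, prime to $|G|$, disjoint from $S\cup S'$, consisting of one place $u_g$ for each $g\in G$. Applying the hypothesis that $G$ has approximation away from $\bad_G$ to the set $S\cup S'\cup S''$ gives a class $\gamma=[c]\in\H^1(K,G)$ restricting to $g_v$ on $S$, to $\phi^*g_v$ on $S'$, and to the unramified class with Frobenius $g$ at $u_g\in S''$. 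Since each $u_g$ splits completely in $L$, these last conditions force the image of $c|_{\gal(L)}\colon\gal(L)\to G$ (a homomorphism, as $\gal(L)$ acts trivially on $G$) to meet every conjugacy class of $G$; as a finite group is never the union of the conjugates of a proper subgroup, $c|_{\gal(L)}$ is surjective.

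Next I would lift and twist. The semidirect decomposition provides a $\gal(K)$-equivariant section $s\colon G\hookrightarrow E$, so $e:=s\circ c\in Z^1(K,E)$ is a cocycle lifting $c$; put $\tilde\gamma=[e]\in\H^1(K,E)$. Twisting the exact sequence $1\to A\to E\to G\to 1$ by $e$ yields $1\to{}_eA\to{}_eE\to{}_cG\to 1$, and by \S\ref{section twisting} together with Proposition~\ref{prop twisting} the bijection $\tau_e\colon\H^1(K,{}_eE)\xrightarrow{\sim}\H^1(K,E)$ identifies the fibre of $\H^1(K,E)\to\H^1(K,G)$ over $\gamma$ with the image of $\H^1(K,{}_eA)$, compatibly with restriction to each $K_v$. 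As $\epsilon_v$ and $\tilde\gamma_v$ both lie over $g_v$, the class $\tau_{e_v}^{-1}(\epsilon_v)\in\H^1(K_v,{}_eE)$ maps to the neutral element of $\H^1(K_v,{}_cG)$, hence by exactness comes from some $\delta_v\in\H^1(K_v,{}_eA)$. It therefore suffices to prove that the finite abelian $K$-group ${}_eA$ has approximation in $S$: a global class $\delta\in\H^1(K,{}_eA)$ restricting to the $\delta_v$ maps into $\H^1(K,{}_eE)$, and its image under $\tau_e$ is a class $\epsilon\in\H^1(K,E)$ with $\res_v\epsilon=\epsilon_v$ for all $v\in S$, since $\tau_e$ and the connecting maps commute with restriction.

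Finally, to see that ${}_eA$ has approximation in $S$ I would invoke Poitou--Tate. Let $L'$ be the Galois extension of $K$ with $\gal(L')=\ker(c|_{\gal(L)})$; it is Galois by Lemma~\ref{lemma Galois extension}, it contains $L$, and from the formula for the twisted action $\gal(L')$ acts trivially on ${}_eA$, hence also on its Cartier dual $\widehat{{}_eA}$, so $L'$ splits both. By Proposition~\ref{Poitou-Tate lemma} it suffices to show $\Sha^1_S(K,\widehat{{}_eA})=\Sha^1(K,\widehat{{}_eA})$. Given $\xi$ in the left-hand side, Lemma~\ref{Lemma on Sha} lets me write $\xi=\inf(\eta)$ with $\eta\in\H^1(L'/K,\widehat{{}_eA})$, and for every place $u$ one has $\xi_u=0$ if and only if $\res_{D_u}\eta=0$, where $D_u\le\gal(L'/K)$ is the decomposition group (degree-one inflation being injective). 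If $\xi_{v_0}\ne 0$ for some $v_0\in S$, then applying Lemma~\ref{reocurrence lemma} to $\gamma=[c]$ and the place $w(v_0)$ --- this is exactly where the surjectivity of $c|_{\gal(L)}$ is used, and the relevant field there is our $L'$ --- shows that the decomposition groups of $v_0$ and $w(v_0)$ in $\gal(L'/K)$ are conjugate; since inner automorphisms act trivially on $\H^1(\gal(L'/K),\widehat{{}_eA})$, this forces $\xi_{v_0}=0\iff\xi_{w(v_0)}=0$. But $w(v_0)\notin S$ gives $\xi_{w(v_0)}=0$, a contradiction. Hence ${}_eA$ has approximation in $S$, which completes the argument.

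The step I expect to be the genuine obstacle is conceptual rather than computational: the twisted module ${}_eA$ is in general ramified precisely at the places of $S$ (as soon as some $g_v$ is ramified), so one cannot feed $S$ directly into Wang's theorem. This is exactly what the reocurrence results of \S\ref{section reocurrence} are engineered to fix --- the auxiliary places $w(v)$ are made ``indistinguishable'' from $v$ inside the field $L'$ cutting out ${}_eA$, so that a hypothetical element of $\Sha^1_S(K,\widehat{{}_eA})$ not lying in $\Sha^1(K,\widehat{{}_eA})$ would be forced to be nontrivial at some $w(v)\notin S$. The one delicate auxiliary point, and the reason for introducing the extra places $S''$, is arranging that $c|_{\gal(L)}$ is surjective so that Lemma~\ref{reocurrence lemma} can be invoked.
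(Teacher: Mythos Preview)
Your overall architecture is exactly that of the paper --- reoccurrence at auxiliary places $S'$, a surjectivity-forcing set $S''$, the lift via the section $s$, twisting by $c$, and then killing $\Sha^1_S$ of the Cartier dual via Lemma~\ref{reocurrence lemma} and Lemma~\ref{Lemma on Sha}. There is, however, one genuine gap.

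The sentence ``$\gal(L')$ acts trivially on ${}_eA$, \emph{hence also on its Cartier dual} $\widehat{{}_eA}$'' is false in general. The Cartier dual is $\widehat{{}_eA}=\hom({}_eA,\mathbb{G}_m)=\hom({}_eA,\mu_{\exp A})$, and the Galois action on it involves the cyclotomic character on $\mu_{\exp A}$; triviality of the action on ${}_eA$ does \emph{not} force triviality on $\widehat{{}_eA}$ unless $L'\supseteq K(\mu_{\exp A})$. With your choice $L=$ minimal splitting field of $E$ (which for a constant $E$ is just $K$) and $L'$ the field cut out by $c|_{\gal(L)}$, there is no reason for $L'$ to contain the relevant roots of unity. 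But you need $L'$ to split $\widehat{{}_eA}$ both to invoke Lemma~\ref{Lemma on Sha} and to make the decomposition-group argument (``$\xi_u=0\iff\res_{D_u}\eta=0$'') go through.

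The fix is exactly what the paper does: run the whole reoccurrence machine over $L(\mu_n)$ with $n=|E|$ rather than over $L$. Concretely, apply Proposition~\ref{prop reocurrence} with the splitting extension $L(\mu_n)$ (the places of $S$ are unramified there since $S\cap\bad_E=\emptyset$), choose $S''$ to consist of places split in $L(\mu_n)$, and arrange $c|_{\gal(L(\mu_n))}$ to be surjective. Then $L'$ is defined as the field fixed by the kernel of $c|_{\gal(L(\mu_n))}$, so $L'\supseteq L(\mu_n)\supseteq K(\mu_{\exp A})$, and now $L'$ genuinely splits $\widehat{{}_eA}$. With this one change, your argument is correct and coincides with the paper's.
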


\begin{rem}
Note that this result does imply Theorem \ref{Main Thm Intro}. Indeed, one can prove it by induction starting with the fact that the trivial group has weak approximation.
\end{rem}

\begin{proof}[Proof of Theorem \ref{Main Thm}]
Consider a finite set of places $S\subset\Omega_K$ not meeting $\bad_E$ and local classes $\beta_v\in \H^1(K_v,E)$ for $v\in S$. We will find a global class $\beta\in \H^1(K,E)$ mapping to the $\beta_v$'s in three steps:

\subsection*{Step 1:} Constructing a class $\beta'\in \H^1(K,E)$ with prescribed images in $\H^1(K_v,G)$ for $v$ in $S\cup S'$ for a suitable $S'$.

We have the following commutative diagram of pointed sets with exact rows:
\begin{equation}\label{big diagram}
\xymatrix{
 \H^1(K,A) \ar[d] \ar[r] & \H^1(K,E) \ar[d] \ar[r] & \H^1(K,G) \ar@/_1.5pc/[l] \ar[d]  \\
 \displaystyle{\prod_{v\in \Omega_K} \H^1(K_v,A)} \ar[r] & \displaystyle{\prod_{v\in \Omega_K} \H^1(K_v,E)} \ar[r] & \displaystyle{\prod_{v\in \Omega_K} \H^1(K_v,G)}, \ar@/_2pc/[l]
}
\end{equation}
where the arrows going left are set-theoretical sections induced by a fixed section~$G\to E$.

The $\beta_v$'s give us images $\gamma_v\in \H^1(K_v,G)$ for $v\in S$. Let $n=|E|$ and $L$ a Galois extension splitting $E$. Then, by Proposition \ref{prop reocurrence} applied to the extension $L(\mu_n)/K$ (which splits $G$), we know that for every $v\in S$ there exist places $v'\not\in S\cup \bad_E$ for which there are inclusions
\[\phi_v^*:\H^1(K_{v},G)\hookrightarrow \H^1(K_{v'},G).\]
Choose one such $v'$ for each $v\in S$ and denote by $S'$ the set of these places. We may assume that all the $v'$ are different since we have an infinite choice at each time. Define then $\gamma_{v'}:=\phi^*_v(\gamma_v)\in \H^1(K_{v'},G)$.\\

Since we've chosen these new places away from $\bad_E\supset \bad_G$, we know by hypothesis that there exists a global class $\gamma\in \H^1(K,G)$ mapping onto $\gamma_v$ for every $v\in~S\cup S'$. Moreover, we may assume that the restriction of a cocycle $c\in Z^1(K,G)$, representing $\gamma$, to $\gal(L(\mu_n))$ is  surjective: Indeed by Chebotarev's density theorem, there are infinitely many places $w\not\in S\cup S'\cup \bad_E$ totally split in $L(\mu_n)$, so in particular such that $G$ is constant over $K_w$; then, for each element $g\in G$ we may choose one such $w$ and an unramified class $\gamma_w\in \H^1(K_w,G)$ represented by a morphism sending $\sigma_w$ to $g$, so that all $w$'s are distinct; then finally, adding these local conditions to $S\cup S'$ forces $c$ to be surjective when restricted to $\gal(L(\mu_n))$.

Let $\beta'$ be the image of $\gamma$ in $\H^1(K,E)$ via the  section of diagram \eqref{big diagram}. Viewing $G$ as a subgroup of $E$, the class $\beta'$ is represented by the same cocycle $c$.

\subsection*{Step 2: Twisting.} We twist by $c$ to look for a class in $\H^1(K,E)$ satisfying the necessary prescribed local conditions in $S$.

Let us now twist diagram \eqref{big diagram} by the cocycle $c$ representing $\beta'$. This gives us the following new diagram with exact rows
\begin{equation}\label{twisted diagram}
\xymatrix{
\H^1(K,\asd{}{c}{}{}{A}) \ar[d] \ar[r] & \H^1(K,\asd{}{c}{}{}{E}) \ar[d] \ar[r] & \H^1(K,\asd{}{c}{}{}{G}) \ar[d] \\
\displaystyle{\prod_{v\in \Omega_K} \H^1(K_v,\asd{}{c}{}{}{A})} \ar[r] & \displaystyle{\prod_{v\in \Omega_K} \H^1(K_v,\asd{}{c}{}{}{E})} \ar[r] & \displaystyle{\prod_{v\in \Omega_K} \H^1(K_v,\asd{}{c}{}{}{G})}.
}
\end{equation}
For every cohomology class $\xi$ in a set of diagram \eqref{big diagram}, we denote by \asd{}{c}{}{}{\xi} its twisted image in the corresponding set of diagram \eqref{twisted diagram}.

Since the images of $\beta_v$ and $\beta'$ coincide on $\H^1(K_v,G)$ for $v\in S$ by construction and $\asd{}{c}{}{}{\beta'}$ is trivial by the very definition of twisting, we know by exactness that $\asd{}{c}{}{v}{\beta}\in \H^1(K_v,\asd{}{c}{}{}{E})$ comes from an element $\alpha_v\in \H^1(K_v,\asd{}{c}{}{}{A})$.

It suffices then to find a global class $\alpha\in \H^1(K,\asd{}{c}{}{}{A})$ mapping onto $\alpha_v$ for $v\in~S$ to conclude. Indeed, the image of $\alpha$ in $\H^1(K,\asd{}{c}{}{}{E})$ would then map onto the $\asd{}{c}{}{v}{\beta}$'s for $v\in S$ and hence its preimage in $\H^1(K,E)$ by the twisting morphism would map onto the $\beta_v$'s as desired. Thus, in order to conclude, it will suffice by Proposition~\ref{Poitou-Tate lemma} to prove that $\Sha^1_S(K,\asd{}{c}{}{}{\hat A})=\Sha^1(K,\asd{}{c}{}{}{\hat A})$.

\subsection*{Step 3: Proof of $\Sha^1_S(K,\asd{}{c}{}{}{\hat A})=\Sha^1(K,\asd{}{c}{}{}{\hat A})$.}
Consider a class $\alpha\in\Sha^1_S(K,\asd{}{c}{}{}{\hat A})$, i.e. a class in $\H^1(K,\asd{}{c}{}{}{\hat A})$ such that its image $\alpha_v\in \H^1(K_v,\asd{}{c}{}{}{\hat A})$ is trivial for every $v\not\in S$, so in particular for $v'\in S'$. We must show that $\alpha_v=0$ for $v\in S$ too.

Fix then $v\in S$ and its corresponding $v'\in S'$. Lemma \ref{reocurrence lemma} applies in this context to the local classes $\gamma_v$ and $\gamma_{v'}=\phi_v^*(\gamma_v)$, the global class $\gamma$ and the extension $L(\mu_n)/K$. By the lemma the decomposition groups of $v$ and $v'$ in $\gal(L'/K)$ are conjugate, where $L'$ is the extension of $L(\mu_n)$ given by the kernel of $c$ restricted to $\gal(L(\mu_n))$. Note moreover that $L'$ splits $\asd{}{c}{}{}{\hat A}$. Indeed, since $|A|$ divides $n$ we have $\asd{}{c}{}{}{\hat A}=\hom(\asd{}{c}{}{}{}{A},\mu_n)$ and since $L'$ contains $\mu_n$, then this amounts to $L'$ splitting $\asd{}{c}{}{}{A}$. The latter holds since the action of $\sigma\in\gal(K)$ on $a\in\asd{}{c}{}{}{A}$ is given by $c_\sigma\asd{\sigma}{}{}{}{a}c_\sigma^{-1}$, where $\asd{\sigma}{}{}{}{a}$ denotes the action of $\sigma$ on $a$ as an element of $A$. Since $c$ is trivial over $\gal(L')$ and $L'$ clearly splits $A$, we get then our claim. In particular, by Lemma \ref{Lemma on Sha}, we know that the whole group $\Sha^1_S(K,\asd{}{c}{}{}{\hat A})$ comes by inflation from $\H^1(L'/K,\asd{}{c}{}{}{\hat A})$.

Our element $\alpha\in\Sha^1_S(K,\asd{}{c}{}{}{\hat A})$ comes then from $\H^1(L'/K,\asd{}{c}{}{}{\hat A})$ and hence $\alpha_v$ comes from $\H^1({L'}^{(v)}/K_{v},\asd{}{c}{}{}{\hat A})$, where ${L'}^{(v)}$ denotes the (unique) extension of $K_v$ induced by $L'/K$. The same goes for $v'$. Since the decomposition groups of $v$ and $v'$ are conjugate in $\gal(L'/K)$, there is a canonical isomorphism
\[\H^1({L'}^{(v)}/K_{v},\asd{}{c}{}{}{\hat A})\xrightarrow{\sim} \H^1({L'}^{(v')}/K_{v'},\asd{}{c}{}{}{\hat A}),\]
which is compatible with the restrictions from $\H^1(L'/K,\asd{}{c}{}{}{\hat A})$. But $\alpha\in\Sha^1_S(K,\asd{}{c}{}{}{\hat A})$ and $v'\not\in S$, so that $\alpha_{v'}=0$ and hence $\alpha_v=0$ too. Since the same argument works for every $v\in S$, we deduce that $\alpha\in\Sha^1(K,\asd{}{c}{}{}{\hat A})$, proving the claim.
\end{proof}

\section{Counterexamples to weak approximation}\label{section counterexamples}
In this section we show that Theorem \ref{Main Thm} is sharp in the sense that one cannot expect to get approximation (or to solve the Grunwald problem) on the set of bad places.

Recall that a group is bicyclic if and only if it is cyclic or a direct product of two cyclic groups. For a finite group $G$ and a finite $G$-module $A$, we let
\[\Sha^1_\bic(G,A) :=\ker\left[\H^1(G,A)\to \prod_{H\in\bic(G)}\H^1(H,A)\right],\]
where $\bic(G)$ denotes the set of bicyclic subgroups of $G$.

\begin{thm}\label{thm counterexample}
Let $G$ be a finite abelian group, $K$ a number field, and $S$ a finite set of places of $K$. Let $A$ be a finite $G$-module and $E=A\rtimes G$ a constant $K$-group. Assume:
\begin{enumerate}
\item $K$ contains the $\exp(A)$-th roots of unity;
\item there exists $v_0 \in S$ such that $G$ is a Galois group over $K_{v_0}$;
\item $S$ contains all the places of $K$ dividing the order of~$G$;
\item $\Sha^1_\bic(G, \hat A) \neq 0$ (in particular $G$ is not bicyclic).
\end{enumerate}
Then the map
$\H^1(K,E) \to \prod_{v \in S} \H^1(K_v, E)$
is nonsurjective.
\end{thm}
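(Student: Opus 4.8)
The plan is to produce a local family $(\beta_v)_{v\in S}\in\prod_{v\in S}\H^1(K_v,E)$ that cannot be globalized, and to detect the obstruction through the abelian kernel $A$ after twisting. First I would reduce to a statement about $\Sha$-groups: suppose for contradiction that the restriction map is surjective. Using the section $G\to E$ and the exact sequences \eqref{equ:H1} applied fiberwise, one sees that surjectivity onto $\prod_{v\in S}\H^1(K_v,E)$ forces, for every global class $\gamma\in\H^1(K,G)$ whose localizations are ``reachable'', surjectivity of the corresponding map on the twisted kernels $\prod_{v\in S}\H^1(K_v,\,_c\!A)$ modulo the action of $\coker(\H^0\to\H^0)$ described in \eqref{equ:H0-act}. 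So the first key step is: \emph{choose $\gamma$ cleverly}. The natural choice, using hypothesis (2), is to pick $\gamma$ so that its localization at $v_0$ is (the class of) a surjection $\gal(K_{v_0})\twoheadrightarrow G$, while being unramified/trivial at all other places of $S$; such a $\gamma\in\H^1(K,G)$ (equivalently, a $G$-extension of $K$ with prescribed local behaviour) exists because $G$ is abelian, by Grunwald--Wang — here one must check that $(G,K,S)$ is not a ``special case'' of Grunwald--Wang, or circumvent it, which is a minor technical point since we are free to enlarge $S$ by auxiliary split places.

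The second step is the cohomological heart. After twisting diagram-style by a cocycle $c$ representing $\gamma$, the kernel $A$ becomes $_c\!A$, and by hypothesis (1) its Cartier dual is $_c\!\hat A=\hom(_c\!A,\mu_{\exp A})$ with $\mu_{\exp A}\subset K$. By Proposition \ref{Poitou-Tate lemma}, solvability of the twisted local approximation problem at $S$ is governed by whether $\Sha^1_S(K,\,_c\!\hat A)=\Sha^1(K,\,_c\!\hat A)$. I would then exhibit a nonzero class in $\Sha^1_S(K,\,_c\!\hat A)$ not in $\Sha^1(K,\,_c\!\hat A)$, i.e. a class in $\H^1(K,\,_c\!\hat A)$ vanishing locally \emph{outside} $S$ but not everywhere. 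The mechanism: since all places dividing $|G|$ lie in $S$ (hypothesis (3)), at every place $v\notin S$ the twisted module is unramified and the decomposition group maps into $G$ through a \emph{cyclic} subgroup generated by Frobenius (the wild inertia is trivial there and tame inertia acts through a subgroup that, together with Frobenius, is metacyclic — and in fact for $v\notin S$ with $v\nmid |G|$ the relevant image lands in a cyclic group after accounting for the $\mu$-condition). Thus a class in $\Sha^1_\bic(G,\hat A)$, inflated along $\gal(K)\to G$ (using that $L=K$ for $G$ constant, but the action on $A$ makes the splitting field exactly the $G$-extension cut out by $\gamma$), restricts trivially at every $v\notin S$ because the decomposition group image is bicyclic, while it restricts \emph{nontrivially} at $v_0$ because there the decomposition group surjects onto $G$ and we chose our class in $\Sha^1_\bic(G,\hat A)\setminus\{0\}$, which by hypothesis (4) is nonzero and, by definition, does not die on the full group $G$.

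Thus the third step is to verify these two claims precisely: (a) for $v_0$, that $\inf^{\gal(K_{v_0})}_{G}$ of a nonzero element of $\Sha^1_\bic(G,\hat A)$ is nonzero in $\H^1(K_{v_0},\,_c\!\hat A)$ — this follows because the localization of $c$ at $v_0$ gives a surjection, so inflation from $\H^1(G,\hat A)$ to $\H^1(K_{v_0},\,_c\!\hat A)$ is injective (inflation along a surjection of profinite groups is injective on $\H^1$); and (b) for $v\notin S$, that the image of the decomposition group $\gal(K_v)\to G$ (via $c$) is bicyclic. Claim (b) is where the hypothesis ``$S$ contains all $v\mid|G|$'' is essential: for $v\nmid|G|$ the extension $K_v^{\mathrm{tr}}$ carries all the action, $\Gamma_v$ is topologically generated by $\sigma_v,\tau_v$ with $\tau_v$ of order prime to $v$, and the image in $G$ is a quotient of $\Gamma_v$; a quotient of a group generated by two elements one of which is ``torsion prime to the characteristic'' lands, after passing to the relevant abelian or tame quotient that sees $G$, in a group generated by two elements — hence bicyclic since $G$ is abelian. \emph{The main obstacle} I anticipate is precisely pinning down claim (b): one must argue that the image in the \emph{abelian} group $G$ of the tamely-ramified decomposition group (more precisely, the image under the cocycle $c$, which need not be a homomorphism) is genuinely a bicyclic — i.e. $2$-generated abelian — subgroup, rather than merely topologically $2$-generated in a way that could still be non-bicyclic; this requires using that $G$ is abelian together with the structure of $\Gamma_v/T_v^n$ from Lemma \ref{lemma local H1s}, and perhaps shrinking to the part of the cocycle that matters for evaluating the Brauer/$\Sha$ pairing. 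Once (b) is secured, assembling the nonzero element of $\Sha^1_S\setminus\Sha^1$ and invoking Proposition \ref{Poitou-Tate lemma} finishes the argument.
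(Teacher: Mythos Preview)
Your core mechanism is right, and claim (b)---which you flag as the main obstacle---is in fact immediate: since $G$ is constant and abelian, $c$ \emph{is} a homomorphism $\gal(K)\to G$, and for $v\nmid|G|$ it factors through $\Gamma_v$, whose image in an abelian group is generated by two elements, hence bicyclic. The real gap lies in your opening reduction. You assert that surjectivity of $\H^1(K,E)\to\prod_{S}\H^1(K_v,E)$ forces, for your fixed $c$, surjectivity of $\H^1(K,\asd{}{c}{}{}{A})\to\prod_{S}\H^1(K_v,\asd{}{c}{}{}{A})$ modulo the $\H^0$-cokernel action. But a putative global lift $\beta'\in\H^1(K,E)$ of classes $(\beta'_v)$ lying over $(c_v)_{v\in S}$ has $c':=\pi(\beta')$ with $c'_v=c_v$ for $v\in S$, yet there is no reason that $c'=c$ globally; then $\asd{}{c}{}{}{\beta'}$ maps to the nontrivial class $c'-c\in\H^1(K,G)$ and does not come from $\H^1(K,\asd{}{c}{}{}{A})$ at all. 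Your obstruction in $\Sha^1_S(K,\asd{}{c}{}{}{\hat A})$ thus says nothing about lifts arising through such alternative $c'$.

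The paper addresses exactly this with an additional argument (its Steps 4--5). Because the obstruction class is pulled back from $\Sha^1_\bic(G,\hat A)$ and $c_v=c'_v$ gives canonical identifications $i_v:\H^1(K_v,\asd{}{c}{}{}{A})\cong\H^1(K_v,\asd{}{c'}{}{}{A})$, one shows that $c^*(\gamma)$ and ${c'}^*(\gamma)$ pair identically with $(\alpha_v)$ and $(i_v(\alpha_v))$ respectively. One must then further check that the $\H^0$-cokernel action---which relates $(i_v(\alpha_v))$ to the actual preimage $(\alpha'_v)$ of $\asd{}{c'}{}{}{\beta'}$---preserves non-orthogonality to $\Sha^1_S(K,\asd{}{c'}{}{}{\hat A})$; this uses that $c_v=0$ for $v\in S\smallsetminus\{v_0\}$ makes the local cokernel trivial there, while at $v_0$ the cokernel element lifts to a global $g\in G$ whose dual action stabilizes $\Sha^1_S$. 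A secondary point: your appeal to Grunwald--Wang to construct $c$ is both unnecessary and delicate (enlarging $S$ would weaken, not strengthen, the non-surjectivity conclusion); the paper instead takes $(c_v)_{v\in S}$ itself as the first test family in $\prod_S\H^1(K_v,E)$, and if it already fails to lift the theorem is proved, while if it lifts one obtains $c$ for free.
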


\begin{rem}
Note that, given the structure of local Galois groups recalled in Section \ref{section notations}, Condition (4) implies that the place $v_0$ in Condition (2) must divide the order of~$G$.
\end{rem}

\begin{cor}\label{cor counterexample}
Assume in addition that $K$ contains the $\exp(E)$-th roots of unity, then there is a transcendental Brauer-Manin obstruction to weak approximation on $X := \sln/E$.
\end{cor}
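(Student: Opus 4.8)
The plan is to deduce Corollary \ref{cor counterexample} from Theorem \ref{thm counterexample} by invoking the equivalence between the approximation property for $E$ and weak approximation for $X=\sln/E$ (Proposition \ref{prop WA and H1}), and then to argue that the resulting Brauer--Manin obstruction cannot be algebraic. First I would note that Theorem \ref{thm couterexample intro}, as a special case of Theorem \ref{thm counterexample}, already gives that $\H^1(K,E)\to\prod_{v\in S}\H^1(K_v,E)$ is not surjective when $K$ contains enough roots of unity; by Proposition \ref{prop WA and H1} this means $X$ fails weak approximation in the set $S$ of places above $p$, so $\overline{X(K)}\ne X(K_\Omega)$ after projecting to the $S$-components. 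Assuming further that $K$ contains the $\exp(E)$-th roots of unity (so in particular $E$ becomes, after the twisting arguments, as arithmetically ``split'' as possible over $K$), Colliot-Th\'el\`ene's conjecture --- which is a \emph{theorem} for the rationally connected variety $X$ by Harari's result \cite[Thm. 1]{HarariBulletinSMF} once $E$ is an iterated semidirect product of abelian groups, which $E=A\rtimes G$ is --- gives $\overline{X(K)}=X(K_\Omega)^{\brun}$. Hence $X(K_\Omega)^{\brun}\ne X(K_\Omega)$: there is a Brauer--Manin obstruction to weak approximation.

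The heart of the corollary is then to show this obstruction is \emph{transcendental}, i.e. that $X(K_\Omega)^{\brunal}=X(K_\Omega)$, so that no \emph{algebraic} Brauer class detects the failure. The strategy here is to show $\brunal X$ (modulo $\br K$) is trivial, or at least that its pairing with $X(K_\Omega)$ is trivial, under the hypothesis that $K$ contains sufficiently many roots of unity. The key input is that the algebraic part of the unramified Brauer group of a homogeneous space $\sln/E$ with $E$ finite is controlled by $\H^1(K,\hat{T})$-type data attached to $E$ and, more concretely here, by cohomological invariants built from $E^{\mathrm{ab}}$ and the Galois action; when $K$ already contains the relevant roots of unity the Galois action is trivial enough that these algebraic contributions vanish or become constant. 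I would make this precise by citing the computation of $\brunal(\sln/E)$ from \cite{Demarche} or \cite[\S5.2]{GLABrnral1} (and \cite{HarariBulletinSMF}), which expresses $\brunal X/\br K$ in terms of $\H^1$ of a dual object that dies once the splitting field is $K$ itself after adjoining roots of unity, exactly as in Example \ref{Example}.

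Concretely, the key steps in order are: (1) cite Proposition \ref{prop WA and H1} to translate the non-surjectivity of Theorem \ref{thm counterexample} into failure of weak approximation for $X=\sln/E$ in $S$; (2) observe $E=A\rtimes G$ is an iterated semidirect product of abelian groups, so Harari's theorem \cite[Thm. 1]{HarariBulletinSMF} gives $\overline{X(K)}=X(K_\Omega)^{\brun}$, whence a (nontrivial) Brauer--Manin obstruction; (3) compute $\brunal X$ under the hypothesis $\mu_{\exp(E)}\subset K$ and show it is reduced to $\br K$, so $X(K_\Omega)^{\brunal}=X(K_\Omega)$; (4) conclude the obstruction is transcendental, and point to Example \ref{Example} for an explicit instance. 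I expect step (3) to be the main obstacle: one must actually pin down the algebraic unramified Brauer group of $\sln/E$ and verify it becomes trivial over a field containing $\mu_{\exp(E)}$, which requires care with the precise description of $\brunal$ for quotients of $\sln$ by a finite group and with how twisting (as in Section \ref{section twisting}) interacts with the duality underlying Poitou--Tate. The other steps are essentially formal consequences of results already recalled in the excerpt.
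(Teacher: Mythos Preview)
Your proposal is correct and follows essentially the same route as the paper: combine Theorem~\ref{thm counterexample} with Proposition~\ref{prop WA and H1} to get failure of weak approximation, invoke Harari's theorem \cite[Thm.~1]{HarariBulletinSMF} to conclude there is a Brauer--Manin obstruction, and then show $\brunal X/\br K$ vanishes under the roots-of-unity hypothesis. The only refinement is that your ``main obstacle'' (step~(3)) is in fact immediate: the paper simply cites the explicit formula \cite[Prop.~5.9]{GLABrnral1}, which gives $\brunal X/\br K=0$ directly when $\mu_{\exp(E)}\subset K$, so no further analysis of twisting or Poitou--Tate duality is needed there.
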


\begin{proof}
Indeed, Theorem \ref{thm counterexample} and Proposition \ref{prop WA and H1} tell us that $X$ does not have approximation in $S$ and hence it doesn't have weak approximation. Now, by \cite[Theorem 1]{HarariBulletinSMF} the Brauer-Manin obstruction to weak approximation for such a variety is the only obstruction, whereas the formula given in \cite[Prop 5.9]{GLABrnral1} tells us that the algebraic part of $\brun X$ is trivial, hence the lack of weak approximation for $X$ cannot be explained by algebraic obstructions. The Brauer-Manin obstruction must then come from transcendental elements of $\brun X$.
\end{proof}

\begin{exe}\label{Example}
Let $p$ be a prime number and consider the group $G :=(\z/p\z)^3$. Let $A:=\hat I$, where $I$ is the augmentation ideal of $(\z/p^3\z)[G]$. There is a natural action of $G$ on $A$ and $E:= A\rtimes G$ is a group of order $p^{3 p^3}$. Define $K$ to be $\q(\zeta_{p^4})$ and $S$ to be the unique place of $K$ dividing $p$. By Lemma \ref{lemma examples} here below, we have $\Sha^1_\bic(G,\hat A)\neq 0$ and it is easy to see that all the other assumptions of Corollary~\ref{cor counterexample} are satisfied, hence the group $E$ provides the first explicit example of a transcendental Brauer-Manin obstruction to weak approximation on a homogeneous space.
\end{exe}

\begin{lem}\label{lemma examples}
Let $G$ be a finite group of order $n$ which is not bicyclic, let $R$ be the group ring $(\z/n\z)[G]$ and $I\lhd R$ the augmentation ideal. Then $\Sha^1_\bic(G,I)$ is nontrivial.
\end{lem}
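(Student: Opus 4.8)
The plan is to make every restriction map appearing in the definition of $\Sha^1_\bic(G,I)$ completely explicit, which reduces the lemma to an elementary statement about the orders of the bicyclic subgroups of $G$.

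I would begin with the augmentation exact sequence of $\z[G]$-modules $0\to I\to R\to\z/n\z\to 0$, where $G$ acts on $R=(\z/n\z)[G]$ by left translation. For any subgroup $H\le G$, the restriction of $R$ to $H$ is a direct sum of copies of $(\z/n\z)[H]$, hence induced from the trivial subgroup and cohomologically trivial; so Shapiro's lemma gives $\H^i(H,R)=0$ for all $i\ge 1$. The long exact sequence then produces, for each $H$, a surjection $\delta_H\colon\H^0(H,\z/n\z)=\z/n\z\twoheadrightarrow\H^1(H,I)$ whose kernel is the image of $R^H$ under the augmentation $\epsilon$. Since $R^H$ is the free $\z/n\z$-module on the coset sums $N_{Hg}=\sum_{x\in Hg}x$ and $\epsilon(N_{Hg})=|H|$, one has $\epsilon(R^H)=|H|\,\z/n\z$ and therefore $\H^1(H,I)\cong(\z/n\z)/(|H|\,\z/n\z)$. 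Taking $H=G$, so that $|H|=n\equiv 0$, the connecting map $\delta_G\colon\z/n\z\to\H^1(G,I)$ is an isomorphism.

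By naturality of connecting homomorphisms with respect to restriction, and because restriction acts as the identity on $\H^0(-,\z/n\z)=\z/n\z$, the square relating $\delta_G$ and $\delta_H$ commutes; transporting along $\delta_G$, the restriction map $\H^1(G,I)\to\H^1(H,I)$ is identified with the natural quotient $\z/n\z\to(\z/n\z)/(|H|\,\z/n\z)$, whose kernel is $|H|\,\z/n\z$. Intersecting over all bicyclic subgroups,
\[\Sha^1_\bic(G,I)\;\cong\;\bigcap_{H\in\bic(G)}|H|\,\z/n\z\;=\;\ell\,\z/n\z,\qquad\ell:=\operatorname{lcm}\{|H|:H\in\bic(G)\},\]
a cyclic group of order $n/\ell$. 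Since $\ell$ divides $n$, the lemma is equivalent to the assertion that $\ell<n$ whenever $G$ is not bicyclic.

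This last, purely group-theoretic step is the one I expect to be the main obstacle (the cohomology above being routine). If $\ell=n$, then for every prime $p\mid n$ there is a bicyclic subgroup $H$ with $v_p(|H|)=v_p(n)$, i.e. $H$ contains a Sylow $p$-subgroup of $G$; as a bicyclic group is abelian and every subgroup of a $2$-generated abelian group is again $2$-generated, every Sylow subgroup of $G$ must then be abelian and generated by at most two elements, hence bicyclic. So it suffices to exhibit a non-bicyclic Sylow subgroup of $G$. When $G$ is nilpotent this is immediate: $G$ is the direct product of its Sylow subgroups, and a direct product of bicyclic groups of pairwise coprime orders is itself bicyclic (assemble the at most two generators prime by prime using the Chinese Remainder Theorem), so all Sylows bicyclic would force $G$ bicyclic — in particular this settles the abelian groups appearing in the applications. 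Establishing this final reduction for an arbitrary finite $G$ is the subtle point of the argument.
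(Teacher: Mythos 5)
Your computation follows essentially the same route as the paper's: both use the augmentation sequence $0\to I\to R\to B\to 0$ with $B=\z/n\z$, the cohomological triviality of $R$ over every subgroup, and dimension shifting to identify $\H^1(H,I)$ with $\hat{\H}^0(H,B)\cong(\z/n\z)/|H|\z/n\z$; your $\epsilon(R^H)=|H|\z/n\z$ is exactly the paper's norm subgroup $N_H(B)$, and both identify restriction with the natural quotient map. Both arguments thus reduce the lemma to the assertion $\ell:=\operatorname{lcm}\{|H|:H\in\bic(G)\}<n$.

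Your hesitation at that final step was the right instinct: it exposes a genuine error in the lemma as stated. As you observe, $\ell=n$ holds precisely when every Sylow subgroup of $G$ is bicyclic, and for non-nilpotent $G$ this can happen while $G$ itself is not bicyclic. Concretely, $G=S_3$ is not bicyclic, yet its bicyclic subgroups are the trivial group, the three subgroups of order $2$, and the one of order $3$, so $\ell=\operatorname{lcm}(1,2,3)=6=|G|$ and the computation gives $\Sha^1_\bic(S_3,I)=0$. The paper's last line infers ``$\neq 0$'' merely from $G\notin\bic(G)$, which does not yield $\ell<n$ in general. Your Chinese-Remainder argument does close the case of nilpotent $G$ (in particular $p$-groups), which is the only generality in which the lemma is actually invoked (Example \ref{Example} takes $G=(\z/p\z)^3$). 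Under that additional hypothesis your proof is complete and correct; the gap you flagged lies in the paper's statement and proof, not in your own argument.
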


\begin{proof}
For a $G$-module $A$, denote by $\hat \H^0(G,A)$  the Tate cohomology group $A^G/N_G(A)$ where $N_G(A):=\{\sum_{\sigma\in G} \asd{\sigma}{}{}{}{a} \, |\, a\in A\}$. Put $B:=R/I\cong\z/n\z$. Since $R$ is an induced $H$-module for any subgroup $H\leq G$, we have $\hat \H^0(H,B)\cong \H^1(H,I)$ and hence $\Sha_\bic^1(G,I)$ is isomorphic to
\[\Sha_\bic^0(G,B)=\ker\left[\hat \H^0(G,B)\to \prod_{H\in \bic(G)} \hat \H^0(H,B)\right].\]
Since $\hat \H^0(H,B)\cong \z/|H|\z$ for every $H \leq G$, and since $G\not\in\bic(G)$, we have:
\[\Sha^0_\bic(G,B)\cong \ker\left[\z/n\z \to \prod_{H\in\bic(G)}\z/|H|\z \right]\neq 0.\]
\end{proof}

\begin{proof}[Proof of Theorem \ref{thm counterexample}]
We divide the proof into five steps.

\subsection*{Step 1: Setup} We construct local classes in $\H^1(K_{v},G)$, the fibers of which will contain elements that cannot be approximated by a global element in $\H^1(K,E)$.

By Condition (2), there exists an epimorphism $c_{v_0} : \gal(K_{v_0}) \to~G$. Since $G$ is abelian and constant,  $\hom(\gal(K),G)=Z^1(K,G)=\H^1(K,G)$ and their equivalents for the $K_v$'s. One can then look at $c_{v_0}$ as an element of $\H^1(K_{v_0},G)$. We will omit these identifications from now on and abusively use the same letters for cocycles and cohomology classes for $G$. For $v\in S\smallsetminus\{v_0\}$, let $c_v := 0 \in \H^1(K_v, G)$.

Fix a section $s : G \to E$ of the following exact sequence
\begin{equation}\label{ses}
\xymatrix{
1 \ar[r] & A \ar[r] & E \ar[r]^{\pi} & G \ar[r] \ar@/_1pc/[l]_{s} & 1.
}
\end{equation}
Consider $s(c_{v}) \in Z^1(K_{v}, E)$, which we still denote abusively by $c_v$ by looking at $G$ as a subgroup of $E$ via $s$, and let $\beta_{v}:=[c_v]\in \H^1(K_v, E)$. If $(\beta_v)_{v \in S}$ is not contained in the image of the restriction map $\H^1(K, E) \to \prod_{S} \H^1(K_v, E)$, then we are done. So we can assume that there exists $\beta \in \H^1(K, E)$ lifting the $\beta_v$'s for all $v \in S$. Up to replacing $\beta$ by $s(\pi(\beta))$, one can assume that $\beta=s(c)$ for some $c\in \H^1(K,G)$ lifting the $c_v$'s for $v\in S$. Making once again an abuse of notation, we'll write $\beta=[c]$, viewing $c$ as an element of $Z^1(K,E)$.

\subsection*{Step 2: Twisting.} To study the fibers over $c_v,v\in S$, we twist the exact sequence \eqref{ses} by the cocycle $c$. We get the following twisted exact sequence:
\begin{equation*}
\xymatrix{
1 \ar[r] & \asd{}{c}{}{}{A} \ar[r] & \asd{}{c}{}{}{E} \ar[r] & \asd{}{c}{}{}{G} \ar[r] \ar@/_1pc/[l] & 1,
}
\end{equation*}
where we immediately remark that $\asd{}{c}{}{}{G}=G$ since $G$ is abelian. Moreover, since $c$ takes values in $G$, the section $s$ is still well defined on these twisted forms.

By definition of twisting, the action of $\gal(K)$ on $\asd{}{c}{}{}{A}$ is given by composition of the morphism $c:\gal(K)\to G$ with the morphism $G\to\aut(A)$ defining the action of $G$ on $A$.

\subsection*{Step 3: Defining unattainable classes in $\H^1(K_v,E)$. }
By Condition (1) of Theorem \ref{thm counterexample}, we know that $\hat A=\hom(A,\q/\z)$ and the same goes for its twisted version $\asd{}{c}{}{}{A}$. Since both $G$ and $\gal(K)$ act trivially on $\q/\z$,  the action of $\gal(K)$ on $\asd{}{c}{}{}{\hat A}$ corresponds via $c$ with the action of $G$ on $\hat A$, so that we get a pullback morphism $c^* : \H^1(G, \hat A) \to \H^1(K,\asd{}{c}{}{}{\hat A})$.

Note  that since $G$ is abelian, Condition (3) of Theorem \ref{thm counterexample} and the structure of local Galois groups imply that the image of $c:\gal(K) \to G$ restricted to $\gal(K_v)$ is a bicyclic subgroup, for all cocycles $c \in Z^1(K, G)$ and all places $v \not\in S$. Therefore, for any $c \in Z^1(K,G)$, the morphism $c^*$ restricts to a morphism
\[c^* : \Sha^1_\bic(G, \hat A) \to \Sha^1_S(K, \asd{}{c}{}{}{\hat A}).\]

Condition (4) tells us now that there exists $\gamma \neq 0 \in \Sha^1_\bic(G, \hat A)$. Since $c$ lifts $c_{v_0}$, and the latter was chosen to be  surjective,  $c$ must also be surjective. Hence the above map ${c}^*$  is an inflation map and  is therefore injective, so that ${c}^*(\gamma) \neq 0 \in \Sha^1_S(K,\asd{}{c}{}{}{\hat A})$.

In addition since $c_{v_0}$ is surjective,  one has by inflation that ${c}^*(\gamma)_{v_0}\neq 0$ in $\H^1(K_{v_0}, \asd{}{c}{}{}{\hat A})$, hence ${c}^*(\gamma) \not\in \Sha^1(K, \asd{}{c}{}{}{\hat A})$. By Proposition \ref{Poitou-Tate lemma}, there exists therefore $(\alpha_v)_{v \in S} \in \prod_{S} \H^1(K_v,\asd{}{c}{}{}{A})$ which is not in the image of $\H^1(K, \asd{}{c}{}{}{A})$. Note also that ${c}^*(\gamma)_v = 0$ for all $v\in S\smallsetminus\{v_0\}$ since $c_v = 0$ for these places, hence we can assume that $\alpha_v = 0$ for all $v \neq v_0$.

Consider now the image in $\prod_{S} \H^1(K_{v}, \asd{}{c}{}{}{E})$ of $(\alpha_v)_{v \in S}$. The twisting bijection $\H^1(\cdot, \asd{}{c}{}{}{E}) \to \H^1(\cdot, E)$ tells us that this element is the twist $(\asd{}{c}{}{}{\beta'_v})_{v \in S}$ of an element $(\beta'_v)_{v \in S} \in \prod_{S} \H^1(K_{v}, E)$. We claim that $(\beta'_v)_{v \in S}$ is not in the image of $\H^1(K, E)$.

\subsection*{Step 4: Compatibility between twists and obstructions.}
To prove the latter claim, we first show a certain compatibility relation between the maps $c^*$ and ${c'}^{*}$ for every two classes $c$ and $c'$  in $Z^1(K,G)=\H^1(K, G)$ such that $c_{v} = c'_{v}$ in $\H^1(K_{v}, G)$ for all $v\in S$.
Namely, we claim that there are canonical isomorphisms $i_{v} : \H^1(K_{v}, \asd{}{c}{}{}{A}) \xrightarrow{\sim} \H^1(K_{v},\asd{}{c'}{}{}{A})$ for $v\in S$ such that the following diagram
\begin{equation}\label{equ:compatible}
\xymatrix@R=1em{
& \Sha^1_S(K,\asd{}{c}{}{}{\hat A}) \times \prod_{S} \H^1(K_{v}, \asd{}{c}{}{}{A}) \ar[rr]^<<<<<<<<<<{\langle\, , \,\rangle_c} \ar@<8ex>[dd]^{\prod i_{v}}  & & \q / \z \ar@{=}[dd] \\
\Sha^1_\bic(G, \hat A) \ar[ru]^<<<<<<<<<<{c^*} \ar[rd]^<<<<<<<<<<{{c'}^*} & & & & \\
& \Sha^1_S(K, \asd{}{c'}{}{}{\hat A}) \times \prod_{S} \H^1(K_{v}, \asd{}{c'}{}{}{A}) \ar[rr]^<<<<<<<<<<{\langle \, , \,\rangle_{c'}} & & \q / \z ,
}
\end{equation}
is commutative in the following sense: for all $\gamma \in \Sha^1_\bic(G, \hat A)$ and all $(\alpha_{v})_{v \in S}$ in $\prod_{S} \H^1(K_v, \asd{}{c}{}{}{A})$, one has
\[\langle c^* \gamma, (\alpha_{v}) \rangle_c = \langle {c'}^* \gamma, (i_{v}(\alpha_{v})) \rangle_{c'} \in \q / \z .\]
The left-hand side arrows in the diagram were defined in Step 3, while the right-hand side arrows are given by Proposition \ref{Poitou-Tate lemma}. Now, for $v \in S$, since $c_v = c'_v$ as cocycles, there exists a natural isomorphism of $\gal(K_{v})$-modules $\asd{}{c}{}{}{A} \to \asd{}{c'}{}{}{A}$ (given by the identity map) that induces a group isomorphism
\[i_{v} : \H^1(K_{v}, \asd{}{c}{}{}{A}) \to \H^1(K_{v}, \asd{}{c'}{}{}{A}).\]
Similarly, one can consider the dual isomorphism $\asd{}{c'}{}{}{\hat A} \to \asd{}{c}{}{}{\hat A}$ which induces a corresponding isomorphism $\hat i_v:\H^1(K_v,\asd{}{c'}{}{}{\hat A})\ra \H^1(K_v,\asd{}{c}{}{}{\hat A})$ such that
\[\xymatrix@R=1em{
& \H^1(K_{v},  \asd{}{c}{}{}{\hat A}) \\
\Sha^1_\bic(G, \hat A)  \ar[ru]^{c_{v}^*} \ar[rd]^{{c'}_{v}^*} & \\
& \H^1(K_{v},  \asd{}{c'}{}{}{\hat A}) \ar[uu]^{\hat i_{v}} \, ,
}\]
is commutative. Hence the claim reduces to the commutativity of the following diagram
\[\xymatrix@R=1em{
\H^1(K_{v},\asd{}{c}{}{}{\hat A}) \times \H^1(K_{v}, \asd{}{c}{}{}{A}) \ar[rr]^<<<<<<<<<<{\langle \, ,\, \rangle_c} \ar@<8ex>[dd]^{i_{v}}  & & \q / \z \ar@{=}[dd] \\
& & & & \\
\H^1(K_{v}, \asd{}{c'}{}{}{\hat A}) \times \H^1(K_{v}, \asd{}{c'}{}{}{A}) \ar[rr]^<<<<<<<<<<{\langle\, , \,\rangle_{c'}} \ar@<7ex>[uu]^{\hat i_{v}} & & \q / \z \, ,
}\]
which is just the functoriality of cup-product (or of local duality), proving the claim.

\subsection*{Step 5: Proving that the local classes are globally unattainable.}
Assume on the contrary that there exists $\beta' = [b'] \in \H^1(K,E)$ lifting the $\beta'_{v}$'s for all $v \in S$. Define $c':=\pi(\beta')\in Z^1(K,G)=\H^1(K,G)$ and consider the class $\beta'':=s(\pi(\beta'))=[c']\in \H^1(K,E)$ (we are once again doing an abuse of notation with $c'$).

Let us now twist the exact sequence \eqref{ses} by the cocycle $c'$, so that we get the following twisted exact sequence:
\begin{equation} \label{ses3}
\xymatrix{
1 \ar[r] & \asd{}{c'}{}{}{A} \ar[r] & \asd{}{c'}{}{}{E} \ar[r] & \asd{}{c'}{}{}{G} \ar[r] \ar@/_1pc/[l] & 1.
}
\end{equation}
where we remark once again that $\asd{}{c'}{}{}{G}=G$ and that the section $s$ is still well defined on these twisted forms.

Since $\beta'$ and $\beta''=[c']$ have the same image in $\H^1(K,G)$, we know that $\asd{}{c'}{}{}{\beta'}$ must come from an element $\alpha'\in \H^1(K,\asd{}{c'}{}{}{A})$ and that its image $(\alpha'_v)_{v\in S}\in \prod_S \H^1(K,\asd{}{c'}{}{}{A})$ maps to $(\asd{}{c'}{}{v}{\beta'})_{v\in S}\in \prod_S \H^1(K,\asd{}{c'}{}{}{E})$. In particular, since $(\alpha'_v)_{v\in S}$ comes from a global element, we know by Proposition \ref{Poitou-Tate lemma} that
\begin{equation}\label{equ:ort}
(\alpha'_v)_{v\in S} \textbf{ is orthogonal to } \Sha^1_S(K,\asd{}{c'}{}{}{\hat A}) \, .
\end{equation}

Consider now Diagram \eqref{equ:compatible} and the element $(\alpha_v)_{v\in S}\in \prod_S \H^1(K_v,\asd{}{c}{}{}{A})$ from Step~3. By construction, this element is not orthogonal to ${c}^*(\gamma)$ and hence its image $ (i_v(\alpha_v))_{v\in S}\in \prod_S \H^1(K,\asd{}{c'}{}{}{A})$ is not orthogonal to ${c'}^*(\gamma)$.

The two elements $(i_v(\alpha_v))_{v\in S}$ and $(\alpha'_v)_{v\in S}$ have the same image in $\prod_{v \in S} \H^1(K_{v},\asd{}{c'}{}{}{E})$ by construction (recall that $c_v=c'_v$ for $v\in S$ and hence $\asd{}{c}{}{v}{\beta'}=\asd{}{c'}{}{v}{\beta'}$).
Thus, $(i_v(\alpha_v))_{v\in S}$ and $(\alpha'_v)_{v\in S}$ differ by an action (as described in \eqref{equ:H0-act}, \S\ref{section twisting}) of an element of the group
\[\coker \left(\prod_{v \in S} \H^0(K_{v}, \asd{}{b'}{}{}{E}) \to \prod_{v \in S} \H^0(K_{v},\asd{}{c'}{}{}{G}) \right) .\]
We denote by $({g}_{v})_{v \in S}\in \prod_{S} \H^0(K_{v},\asd{}{c'}{}{}{G})$ a lift of this element.

Now, for all $v \in S\smallsetminus\{v_0\}$, we have by construction $\beta'_v = 1 \in \H^1(K_v, E)$ and hence $b'$ (and a fortiori $c'$) is trivial over $\gal(K_v)$ since $E$ is constant. The map $\H^0(K_{v}, \asd{}{b'}{}{}{E}) \to \H^0(K_{v},\asd{}{c'}{}{}{G})$ corresponds then to $E\to G$, which is surjective. Therefore, $i_v(\alpha_v) = \alpha'_v$ and $\H^0(K_v,\asd{}{c'}{}{}{G})$ acts trivially on $i_v(\alpha_v)$ for these $v$. Finally, the map $\H^0(K,\asd{}{c'}{}{}{G}) \to \H^0(K_{v_0},\asd{}{c'}{}{}{G})$ is clearly surjective (recall that $\asd{}{c'}{}{}{G}=G$ and hence both groups are equal to $G$ and the map is the identity), hence ${g}_{v_0}$ lifts to an element ${g} \in \H^0(K, G)$. We conclude then that $g\cdot (i_v(\alpha_v))_{v\in S}=(\alpha'_{v})_{v \in S}$.

Note that the natural paring $\asd{}{c'}{}{}{A}\times \asd{}{c'}{}{}{\hat A} \ra \mathbb G_m$ is invariant under the morphism $(g,\hat g^{-1}):\asd{}{c'}{}{}{A}\times \asd{}{c'}{}{}{\hat A}\ra \asd{}{c'}{}{}{A}\times \asd{}{c'}{}{}{\hat A}$ given by $(a,f)\ra (g\cdot a,\hat g^{-1}\cdot f)$,  where
$\hat{g} : \asd{}{c'}{}{}{\hat A} \to \asd{}{c'}{}{}{\hat A}$ 
is the dual map to 
${g} : \asd{}{c'}{}{}{A} \to \asd{}{c'}{}{}{A}$ 
defined by the conjugation action of $G$ on $\asd{}{c'}{}{}{A}$. 
By functoriality of cup products \cite[Proposition 1.4.2]{NSW}, the cup product 
$\H^1(K_v, \asd{}{c'}{}{}{A})\times \H^1(K_v,\asd{}{c'}{}{}{\hat A}) \to \mathbb Q/\mathbb Z$ is also invariant under the morphism induced by $(g,\hat g^{-1})$.
Since $(i_v(\alpha_v))_{v\in S}$ is not orthogonal to ${c'}^*(\gamma)$, it follows that 
${g} \cdot (i_v(\alpha_v))_{v\in S}$ is not orthogonal to $\hat{g}^{-1}({c'}^*(\gamma))$. But as ${c'}^*(\gamma)$ is in $\Sha^1_S(K,\asd{}{c'}{}{}{\hat A})$ so is $\hat{g}^{-1}({c'}^*(\gamma))$, hence we finally get that \[g\cdot (i_v(\alpha_v))_{v\in S}=(\alpha'_{v})_{v \in S} \textbf{ is not orthogonal to } \Sha^1_S(K,\asd{}{c'}{}{}{\hat A}),\]
contradicting \eqref{equ:ort}.
\end{proof}


\begin{thebibliography}{ABC00a}

\bibitem[Col03]{CT}
{\bf J.-L. Colliot-Th\'el\`ene.} Points rationnels sur les fibrations. {\it Higher Dimensional Varieties and Rational Points}, Bolyai Society Mathematical Series, vol 12, Springer-Verlag, edited by K. J. B\"or\"oczky,  J. Koll\`ar and  T. Szamuely, 171--221, 2003.

\bibitem[Col14]{ColliotBrnr}
{\bf Jean-Louis Colliot-Th{\'e}l{\`e}ne.} Groupe de {B}rauer non ramifi\'e de quotients par un groupe fini. {\it {P}roc. {A}mer. {M}ath. {S}oc.} 142(5), 1457-1469, 2014.

\bibitem[DG12]{DG}
{\bf P. D\`ebes, N. Ghazi.} Galois covers and the Hilbert-Grunwald property. {\it Ann. Inst. Fourier} 62, 989--1013, 2012.

\bibitem[Dem10]{Demarche}
{\bf C.~Demarche.} Groupe de {B}rauer non ramifi\'e d'espaces homog\`enes \`a stabilisateurs finis. {\it Math. Ann.} 346(4), 949--968, 2010.

\bibitem[Gru33]{Gru}
{\bf W.~Grunwald.} Ein allgemeines Existenztheorem f\"ur algebraische Zahlk\"orper. {\it J. Reine Angew. Math.} 169, 103--107, 1933.

\bibitem[Har07]{HarariBulletinSMF}
{\bf D.~Harari.} Quelques propri\'et\'es d'approximation reli\'ees \`a la cohomologie galoisienne d'un groupe alg\'ebrique fini. {\it Bull. Soc. Math. France} 135(4), 549--564, 2007.

\bibitem[Kal48]{Kal}
{\bf L.~Kaloujnine.} La structure des $p$-groupes de Sylow des groupes sym\'etriques finis. {\it Ann. Sci. \'Ecole Norm. Sup. (3)} 65, 239--276, 1948.

\bibitem[Luc14a]{GLA-AFPHEH}
{\bf G.~{L}ucchini {A}rteche.} Approximation faible et principe de Hasse pour des espaces homog\`enes \`a stabilisateur fini r\'esoluble. {\it Math. Ann.} 360, 1021--1039, 2014.

\bibitem[Luc14b]{GLABrnral1}
{\bf G.~{L}ucchini {A}rteche.} Groupe de Brauer non ramifi\'e des espaces homog\`enes \`a stabilisateur fini. {\it J. Algebra} 411, 129-181, 2014.

\bibitem[Neu79]{NeukirchSolvable}
{\bf J.~Neukirch.} On solvable number fields. {\it Invent. Math.} 53(2), 135--164, 1979.

\bibitem[NSW08]{NSW}
{\bf J.~Neukirch, A. Schmidt, K. Wingberg.} {\it Cohomology of number fields}. Grundlehren der Mathematischen Wissenschaften, No.~323.
Springer-Verlag, Berlin, second edition, 2008.

\bibitem[Pie82]{Pie}
{\bf R.~Pierce.} {\it Associative algebras}. Graduate Texts in Mathematics, No.~88. Springer-Verlag, New York-Berlin, 1982.

\bibitem[Sal82]{Sal}
{\bf D.~Saltman.} Generic Galois extensions and problems in field theory. {\it Adv. in Math.} 43, 250--283, 1982.

\bibitem[Ser02]{SerreCohGal}
{\bf J.-P.~Serre.} {\it Galois cohomology}. Springer Monographs in Mathematics. Springer-Verlag, Berlin, {E}nglish edition, 2002.

\bibitem[Sko01]{Skor}
{\bf A.~Skorobogatov.} {\it Torsors and rational points}, Cambridge Tracts in Mathematics, No.~144. Cambridge University Press, Cambridge, 2001.

\bibitem[Wan50]{Wan}
{\bf S.~Wang.} On Grunwald's theorem. {\it Ann. of Math. (2)} 51, 471--484, 1950.

\bibitem[Wei55]{Weir}
{\bf A.~J.~Weir.} Sylow $p$-subgroups of the classical groups over finite fields with characteristic prime to $p$. {\it Proc. Amer. Math. Soc.} 6, 529--533, 1955.


\end{thebibliography}
\end{document}